\let\@wraptoccontribs\wraptoccontribs
\DeclareMathOperator\C{\mathbb C}
\newtheorem{theorem}{Theorem}[section]
\newtheorem{lemma}[theorem]{Lemma}
\newtheorem{cor}[theorem]{Corollary}
\newtheorem{prop}[theorem]{Proposition}
\theoremstyle{definition}
\newtheorem{definition}[theorem]{Definition}
\newtheorem{example}[theorem]{Example}
\theoremstyle{remark}
\newtheorem{remark}[theorem]{Remark}
\numberwithin{equation}{section}
\newcommand{\dontprint}[1]\relax
\newcommand{\sspan}{\operatorname{span}}
\newcommand{\we}{\wedge}
\renewcommand{\P}{{\mathbb P}}
\newcommand{\wt}{\widetilde}
\newcommand{\ot}{\otimes}
\newcommand{\Hom}{\operatorname{Hom}}
\newcommand{\VV}{{\mathcal V}}
\newcommand{\CC}{{\mathcal C}}
\newcommand{\KK}{{\mathcal K}}
\newcommand{\OO}{{\mathcal O}}
\newcommand{\UU}{{\mathcal U}}
\newcommand{\WW}{{\mathcal W}}
\newcommand{\sub}{\subset}
\newcommand{\ov}{\overline}
\newcommand{\im}{\operatorname{im}}
\newcommand{\la}{\lambda}
\renewcommand{\a}{\alpha}
\newcommand{\id}{\operatorname{id}}
\newcommand{\tot}{\operatorname{tot}}
\renewcommand{\th}{\theta}
\newcommand{\ga}{\gamma}
\newcommand{\rk}{{\operatorname{rk}}}
\newcommand{\codim}{{\operatorname{codim}}}
\newcommand{\sym}{{\operatorname{sym}}}
\renewcommand{\k}{{\mathbf{k}}}
\newcommand{\Sing}{{\operatorname{Sing}}}
\title{Schmidt rank and singularities}
\author{David Kazhdan}
\author{Amichai Lampert}
\author{Alexander Polishchuk}
\thanks{A.L. is supported by the National Science Foundation
under Grant No. DMS-1926686 and by the Israel Science Foundation under Grant No. 2112/20}
\thanks{A.P. is partially supported by the NSF grant DMS-2001224, 
and within the framework of the HSE University Basic Research Program and by the Russian Academic Excellence Project `5-100'.}
\address{Einstein Institute of Mathematics,
The Hebrew University of Jerusalem,
Jerusalem 91904, Israel}
\email{kazhdan@math.huji.ac.il}
\address{Einstein Institute of Mathematics,
The Hebrew University of Jerusalem,
Jerusalem, Israel; and Institute for Advanced Study, Princeton, NJ, USA}
\email{amichai.lam@gmail.com}
\address{
    Department of Mathematics, 
    University of Oregon, 
    Eugene, OR 97403, USA; National Research University Higher School of Economics; and Korea Institute for 
    Advanced Study 
  }
  \email{apolish@uoregon.edu}
\begin{document}

\begin{abstract}
We revisit Schmidt's theorem connecting the Schmidt rank of a tensor with the codimension of a certain
variety and adapt the proof to the case of arbitrary characteristic. We also find a sharper result of this kind
for homogeneous polynomials, assuming the characteristic does not divide the degree. We then use this
to relate the Schmidt rank of a homogeneous polynomial (resp., a collection of homogeneous polynomials of the same degree) with the codimension of the singular locus of the corresponding hypersurface (resp., intersection of hypersurfaces). This gives an effective version of Ananyan-Hochster's theorem \cite[Theorem A]{AH}.
\end{abstract}

\maketitle

\section{Introduction}

Let $\k$ be a field (of any characteristic) and
$$P:V_1\times V_2\times \ldots\times V_d \to \k$$
a polylinear form, where $V_i$ are finite dimensional vector spaces over $\k$. Equivalently, we view $P$ as a tensor in $V_1^*\ot\ldots\ot V_d^*$.

\begin{definition}\label{S-rank-def}
(i) We say that $P\neq 0$ has {\it Schmidt rank} $1$ if there exists a partition $[1,d]=I\sqcup J$ into two nonempty
parts and polylinear forms $P_I(v_{i_1},\ldots,v_{i_r})$, $P_J(v_{j_1},\ldots,v_{j_s})$, where
$v_a\in V_a$, $I=\{i_1<\ldots<i_r\}$, $J=\{j_1<\ldots<j_r\}$, such that
$P=P_I\cdot P_J$. In general the {\it Schmidt rank} of $P$, denoted as $\rk^S(P)$,
is the smallest number $r$ such that $P=\sum_{i=1}^r P_i$
with $P_i$ of Schmidt rank $1$. 
For a collection of tensors $\ov{P}=(P_1,\ldots,P_s)$ we define the {\it Schmidt rank} $\rk^S(\ov{P})$ as
the minimum of Schmidt ranks of nontrivial linear combinations of $(P_i)$. 

\noindent
(ii) Given a collection of nonempty subsets $I_1,\ldots,I_r\sub [1,d]$ and a collection $(P_{I_1},\ldots,P_{I_r})$,
where $P_{I_i}$ is a polylinear form on $\prod_{a\in I_i} V_a$,
we denote by $(P_{I_1},\ldots,P_{I_r})\sub V_1^*\ot\ldots\ot V_d^*$ and call this the {\it tensor ideal generated by
$P_{I_1},\ldots,P_{I_r}$}, the subspace of polylinear forms of the form
$$P=\sum_{i=1}^r P_{I_i}\cdot Q_{J_i},$$
for some polylinear forms $Q_{J_i}$ on $\prod_{b\in J_i} V_b$ , where $J_i=[1,d]\setminus I_i$. 
\end{definition}

The Schmidt rank of a tensor, along with a set of related notions, such as {\it slice rank}, {\it $G$-rank}, {\it analytic rank},
as well as the version of Schmidt rank for homogeneous polynomials also known as {\it strength} (see below), 
has been a subject of study in many
recent works (see \cite{AKZ}, \cite{BBOV1}, \cite{BBOV2}, \cite{Derksen}, \cite{KZ} and references therein). One of the goals of this paper is to establish a precise relation 
(in the case of an algebraically closed base field $\k$) between this notion and the codimension
of the singular locus of the corresponding hypersurface, thus giving an effective version of Ananyan-Hochster's theorem \cite[Theorem A]{AH}.

Let us define the subvariety $Z_P=Z_P^{V_1}\sub V_2\times \ldots\times V_d$ as the set of $(v_2,\ldots,v_d)$ such that
$P(v_1,v_2,\ldots,v_d)=0$ for all $v_1\in V_1$. 
Following Schmidt, let us set
$$g(P):=\codim_{V_2\times\ldots \times V_d} Z_P.$$
In \cite{KMZ} (where the authors consider the case $d=3$), this number is called {\it geometric rank} of $P$. Using \cite[Thm.\ 3.2]{KMZ}, one can see that it does not depend on the ordering of
the variables $v_1,\ldots,v_d$.

It is easy to see that one has
\begin{equation}\label{trivial-ineq}
g(P)\le \rk^S(P)
\end{equation}
(see Lemma \ref{trivial-ineq-lem}(i) below, or \cite[Thm.\ 1]{KMZ}).

Similarly, for a collection $\ov{P}=(P_1,\ldots,P_s)$, we define $Z_{\ov{P}}\sub V_2\times\ldots\times V_d$ 
by the condition on $(v_2,\ldots,v_d)$ that the corresponding map
$$V_1\to \k^s: v_1\mapsto (P_i(v_1,v_2,\ldots,v_d))_{1\le i\le s}$$
has rank $<s$, and we set
$g(\ov{P}):=\codim_{V_2\times\ldots \times V_d} Z_{\ov{P}}$.

The proof of the following theorem follows closely the proof of a similar result in the case where $\k=\C$ and $P$ is symmetric, 
given in \cite{Schmidt}. We modified the proof so that it would work in arbitrary characteristic and also streamlined some parts of the original argument.
The fact that the original proof can be adapted to arbitrary characteristic was also pointed out in \cite[Sec.\ 4]{Schmidt}.

\begin{theorem}\label{main-thm}
(i) Let $g'(P)$ denote the codimension in $V_2\times\ldots\times V_d$ of the Zariski closure of the set of $\k$-points in $Z_P$
(so $g(P)\le g'(P)$ and $g(P)=g'(P)$ if $\k$ is algebraically closed).
Then one has
$$\rk^S(P)\le C_d\cdot g'(P),$$
where $C_d=\max(2+\th_{d-2},2^{d-2}-1)$, and 
$\th_n$ is the number of ordered collections of disjoint nonempty
subsets $I_1\sqcup\ldots\sqcup I_p\subsetneq [1,n]$ (with $p\ge 1$).
In particular, we have $C_3=2$, $C_4=4$ and $C_5=14$.

\noindent
(ii) Assume $\k$ is algebraically closed. Then
for a collection $\ov{P}=(P_1,\ldots,P_s)$, one has
$$\rk^S(\ov{P})\le C_d\cdot (g(\ov{P})+s-1).$$
\end{theorem}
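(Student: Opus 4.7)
The plan is to prove Part (i) by induction on $d$ and deduce Part (ii) from Part (i) via a dimension-counting incidence argument. The base case $d=2$ is immediate, since for a bilinear form $\rk^S(P)=\rk(P)=g(P)\le g'(P)$ and $C_2=2$. For the inductive step in Part (i), set $g=g'(P)$ and choose a smooth $\k$-point $p=(v_2^0,\ldots,v_d^0)$ of the Zariski closure of $Z_P(\k)\sub V_2\times\cdots\times V_d$, which is available by the definition of $g'$. Two local features of $P$ at $p$ drive the argument: the vanishing $P(\cdot,v_2^0,\ldots,v_d^0)\equiv 0$ on $V_1$, and the combined ``partial-derivative'' map
$$L\colon V_2\oplus\cdots\oplus V_d\to V_1^*,\qquad L|_{V_j}(v_j)\;=\;P(\cdot,v_2^0,\ldots,v_{j-1}^0,v_j,v_{j+1}^0,\ldots,v_d^0),$$
whose image has dimension at most $g$ (since $\ker L$ contains the tangent space to the closure at $p$, of codimension $g$).

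The plan for the inductive step, following Schmidt's strategy, is to convert this local first-order structure into a Schmidt decomposition by a recursive procedure indexed by ordered sequences of disjoint nonempty subsets $I_1,\ldots,I_p\subsetneq[2,d]$. At each stage a block $I_k$ either contributes a Schmidt-rank-one product term with one factor supported on $\prod_{a\in I_k}V_a$, or the process terminates and the remaining piece is a multilinear form of strictly smaller arity, to which the inductive hypothesis applies. The number of such ordered partial partitions of $[2,d]$ is exactly $\theta_{d-2}$, giving the bound $(2+\theta_{d-2})\,g$ after accounting for two ``base'' contributions (one from the partition $\{1\}$ versus $[2,d]$ via the $g$-dimensional image of $L$, and one residual piece to seed the induction). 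The alternative constant $2^{d-2}-1$ comes from a cruder uniform decomposition indexed by all nonempty subsets of $[2,d]$, which is sharper only for the smallest values of $d$. The main obstacle is the combinatorial bookkeeping: verifying that each peeled-off block genuinely contributes Schmidt rank one, and that the $g'$-invariant of each residual form admits clean inductive control.

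For Part (ii), assume $\k$ is algebraically closed and set $g=g(\ov{P})$. The plan is to find a nonzero linear combination $P=\sum_i c_iP_i$ with $g(P)\le g+s-1$ and then invoke Part (i). Introduce the incidence variety
$$I\;:=\;\{((v_2,\ldots,v_d),(c_1,\ldots,c_s))\in(V_2\times\cdots\times V_d)\times\k^s:\textstyle\sum_i c_iP_i(v_1,v_2,\ldots,v_d)=0\text{ for all }v_1\in V_1\}.$$
The fiber of the first projection $\pi_1\colon I\to V_2\times\cdots\times V_d$ over $(v_2,\ldots,v_d)$ is the left kernel of $(P_i(v_1,v_2,\ldots,v_d))_{i,v_1}$, which is positive-dimensional precisely on $Z_{\ov{P}}$. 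The fiber of the second projection $\pi_2\colon I\to\k^s$ over $(c_i)\neq 0$ is $Z_{\sum_i c_iP_i}$. Since $\dim\pi_2(I)\le s$ while the irreducible component of $I$ dominating $Z_{\ov{P}}$ (away from the zero section) has dimension at least $\dim Z_{\ov{P}}+1$, the fiber-dimension theorem produces some $(c_i)\neq 0$ with $\dim Z_{\sum_i c_iP_i}\ge\dim Z_{\ov{P}}+1-s$, i.e., $g(\sum_i c_iP_i)\le g+s-1$. Applying Part (i) to this combination and using the definitional inequality $\rk^S(\ov{P})\le\rk^S(\sum_i c_iP_i)$ yields the claimed bound.
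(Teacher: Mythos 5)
Your Part~(ii) is fine: the incidence-variety argument producing a single nonzero combination $P_{\ov c}$ with $g(P_{\ov c})\le g(\ov P)+s-1$ is just a rephrasing of the paper's observation that $Z_{\ov P}=\bigcup_{\ov c\neq 0}Z_{P_{\ov c}}$ is a union over $\P^{s-1}$, and it correctly reduces (ii) to (i). The problem is Part~(i), which as written is a plan rather than a proof, and the plan is not the one that works. You extract only first-order information at a single smooth point $p$: the vanishing of $P(\cdot,p)$ and the map $L$ whose image has dimension $\le g'$. That is (roughly) the $p=1$ case of the real induction. The actual argument requires much more: one projects the chosen component of $\overline{Z_P(\k)}$ to $W=V_3\times\cdots\times V_d$ (say), writes $g'=r+c$ where $r$ is the generic rank of $P$ viewed as a map $V_2\otimes\OO\to V_1^*\otimes\OO$ over the image $Z_W$ and $c=\codim_W Z_W$, constructs via a determinantal lemma global sections $v_i(w)$ of the kernel bundle over all of $Z_W$, and then Taylor-expands the resulting identity $P(u,v_i(w),w)\in U^*\otimes I_{Z_W}$ to \emph{all} orders up to $d-2$, using that higher derivatives of elements of $I_{Z_W}$ at a smooth point lie in the ideal generated by higher derivatives of a regular system of parameters $g_1,\ldots,g_c$. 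The induction is on the size of a subset $I\subseteq[1,d-2]$ of directions being differentiated, not on the arity $d$. Your proposed induction on $d$, with ``the remaining piece is a multilinear form of strictly smaller arity, to which the inductive hypothesis applies,'' silently assumes that the $g'$-invariant of the residual form is controlled after peeling off rank-one terms; you flag this yourself as ``the main obstacle,'' but it is not bookkeeping --- it is the entire content of the theorem, and no mechanism for it is supplied.

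A second, more concrete sign that the skeleton is off: you describe $2+\th_{d-2}$ and $2^{d-2}-1$ as arising from two \emph{alternative} decompositions, with the cruder one ``sharper only for small $d$.'' If they were two alternative valid upper bounds one would take their minimum, not their maximum. In the correct proof the two constants multiply the two different summands of $g'=r+c$ in a \emph{single} decomposition: the terms built from the operators $\ell_j\circ C_{I_1}\cdots C_{I_p}$ contribute $r(2+\th_{d-2})$, the terms built from the mixed derivatives $g_{a,I'}$ of the equations of $Z_W$ contribute $c(2^{d-2}-1)$, and $\max$ appears only through $rA+cB\le(r+c)\max(A,B)$. Since your proposal never introduces the splitting $g'=r+c$, it cannot reproduce this count, and the claimed constant $C_d$ is not actually derived.
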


In the appendix we prove another version of theorem \ref{main-thm} with better bounds for $d\ge 6.$
Even though Schmidt applied the above result to symmetric tensors $P$ corresponding to homogeneous polynomials,
we observe that in the symmetric case it is natural to modify the relevant variety $Z_P$, and that this leads to
much better estimates on the rank.

Let $f$ be a homogeneous polynomial of degree $d$ on a finite-dimensional $\k$-vector space $V$.
The {\it Schmidt rank} of $f$, denoted as $\rk^S(f)$, is the minimal number $r$ such that $f=\sum_{i=1}^r g_ih_i$, where $g_i$ and $h_i$ are homogeneous polynomials of positive degrees. 
Note that if $\rk^S(f)=r$ then in terminology of \cite{AH}, $f$ has {\it strength} $r-1$.
For a collection $\ov{f}=(f_1,\ldots,f_s)$, the Schmidt rank
$\rk^S(\ov{f})$ is defined as the minimum of Schmidt ranks of nontrivial linear combinations of $f_i$.

Let $H_f(x)(\cdot,\cdot)$ 
denote the Hessian form of $f$ given by the second derivatives of $f$. It is a symmetric bilinear form on $V$
depending polynomially on a point $x\in V$. The symmetric analog of the variety $Z_P$ is the subvariety
$Z^{\sym}_f\sub V\times V$ given as
$$Z^{\sym}_f:=\{(v,x)\in V\times V \ |\ v\in \ker H_f(x)\}.$$
Let us set 
$$g_{\sym}(f):=\codim_{V\times V}(Z^{\sym}_f).$$
The symmetric analog of \eqref{trivial-ineq} is the inequality
\begin{equation}\label{sym-trivial-ineq}
g_{\sym}(f)\le 4\rk^S(f)
\end{equation}
(see Lemma \ref{trivial-ineq-lem}(ii)).

Similarly for a collection $\ov{f}=(f_1,\ldots,f_s)$ of homogeneous polynomials of degree $d$, we define
the subvariety $Z^{\sym}_{\ov{f}}\sub V\times V$ as the set of $(v,x)$ such that the map 
$$V\to\k^s: v'\mapsto (H_{f_i}(x)(v',v))_{1\le i\le s}$$
has rank $<s$. We denote by $g_{\sym}(\ov{f})$ the codimension of $Z^{\sym}_{\ov{f}}$ in $V\times V$.

\begin{theorem}\label{sym-main-thm}
(i) Assume that $d\ge 3$ and that the characteristic of $\k$ does not divide $(d-1)d$.
Let $g'_{\sym}(f)$ denote the codimension in $V\times V$ of the Zariski closure of the set of $\k$-points in $Z^{\sym}_f$.
Then one has
$$\rk^S(f)\le 2^{d-3}\cdot g'_{\sym}(f).$$ 

\noindent
(ii) With the same assumptions as in (i), assume also that $\k$ is algebraically closed.
Then
$$\rk^S(\ov{f})\le 2^{d-3}\cdot (g_{\sym}(\ov{f})+s-1).$$
\end{theorem}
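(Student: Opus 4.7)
\smallskip
\noindent\textbf{Part (ii) from Part (i): generic linear combination.} Set $f_\la := \sum_{i=1}^s \la_i f_i$ and consider the incidence variety
\[
\Sigma := \{(\la,v,x)\in \k^s\times V\times V\,:\,\textstyle\sum_i \la_i H_{f_i}(x) v = 0\}.
\]
I plan a two-projection dimension count. The projection $\Sigma \to V\times V$ lands in $Z^{\sym}_{\ov f}$; on its main stratum (where the rank of the map $v'\mapsto (H_{f_i}(x)(v',v))_i$ is $s-1$) the fiber is a single line in $\k^s$, while deeper strata cost extra codimension that cancels the gain in fiber dimension. The projection $\Sigma\to\k^s$ has generic fiber $Z^{\sym}_{f_\la}$. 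The section map $Z^{\sym,s-1}_{\ov f}\to \P^{s-1}$ sending $(v,x)$ to the kernel direction is dominant, and fibers above generic $[\la]$ embed into $Z^{\sym}_{f_\la}$. Combining these yields $g'_{\sym}(f_\la) \le g_{\sym}(\ov f) + s - 1$ for generic $\la$. Applying Part (i) to $f_\la$ gives $\rk^S(f_\la) \le 2^{d-3}(g_{\sym}(\ov f) + s - 1)$, and $\rk^S(\ov f)\le \rk^S(f_\la)$ by definition of $\rk^S(\ov f)$.

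\smallskip
\noindent\textbf{Part (i): induction on $d\ge 3$.} Write $g := g'_{\sym}(f)$; this equals the generic rank of $H_f(x)$ (after passing to the Zariski closure of $\k$-points of $Z^{\sym}_f$).

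\emph{Base case $d=3$.} Here $H_f(x)$ is linear in $x$, and Euler's identity $6 f(x) = H_f(x)(x,x)$ (valid since $\mathrm{char}(\k)\nmid 6$) reconstructs $f$ from $H_f$. Under the natural relabelling of slots, $Z^{\sym}_f = Z_P$ for the polarization trilinear form $P$, so $g = g'(P)$. Theorem~\ref{main-thm}(i) at $d=3$ gives only $\rk^S(P)\le 2g$; the sharpened bound $\rk^S(f)\le g$ is obtained by re-running the same argument while insisting that the Schmidt decomposition of $P$ respect its $S_3$-symmetry, thereby eliminating the double counting responsible for the factor $C_3 = 2$ (which arose from the two possible splittings $\{1\}\sqcup\{2,3\}$ and $\{1,2\}\sqcup\{3\}$).

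\emph{Inductive step $d-1\to d$ (for $d\ge 4$).} I pass to an auxiliary polynomial $\tilde f$ of degree $d - 1$ satisfying
\[
g'_{\sym}(\tilde f) \le g \quad \text{and} \quad \rk^S(f) \le 2\,\rk^S(\tilde f).
\]
Combined with the inductive bound $\rk^S(\tilde f) \le 2^{d-4} g$, this yields $\rk^S(f) \le 2^{d-3} g$. The natural candidate is $\tilde f := \partial_{v_0} f$ for sufficiently generic $v_0 \in V$: the identity $H_{\partial_{v_0} f}(x) = \partial_{v_0} H_f(x)$ controls the Hessian codimension under this passage, and Euler's identity $d\,f = \sum_i x_i \partial_i f$ (requiring $\mathrm{char}(\k)\nmid d$) together with a Hessian-aware integration procedure is meant to reconstruct $f$ from $\tilde f$ at a Schmidt-rank cost of at most a factor of $2$.

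\smallskip
\noindent\textbf{Main obstacle.} The crux is the inductive step, and in particular the second of the two displayed inequalities: the naive antiderivative of a Schmidt decomposition of $\tilde f$ does not preserve Schmidt rank (cross terms proliferate), so one must exploit the low Hessian codimension to absorb these cross terms into a bounded number of rank-$1$ pieces. This is precisely where the hypothesis $\mathrm{char}(\k)\nmid d(d-1)$ becomes essential. The base case $d=3$ is a secondary obstacle, demanding a genuinely symmetric refinement of the proof of Theorem~\ref{main-thm}(i) that replaces the constant $2$ by $1$.
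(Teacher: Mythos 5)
Your reduction of (ii) to (i) is essentially the paper's: the paper simply writes $Z^{\sym}_{\ov f}=\bigcup_{\ov c\neq 0}Z^{\sym}_{f_{\ov c}}$ over $\P^{s-1}$ and loses $s-1$ in codimension, which is your incidence-variety count in compressed form. That part is fine. Part (i), however, is where the theorem actually lives, and your induction on $d$ has gaps that I do not see how to close. The inductive step rests on two unproved inequalities. The inequality $\rk^S(f)\le 2\,\rk^S(\partial_{v_0}f)$ cannot be obtained by ``integrating'' a Schmidt decomposition of $\partial_{v_0}f$, because $f$ is not determined by a single directional derivative: the kernel of $\partial_{v_0}\colon \k[V]_d\to\k[V]_{d-1}$ is enormous, and Euler's identity reconstructs $f(x)$ only from the full family $v\mapsto \partial_v f(x)$ evaluated at $v=x$, not from one generic slice. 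The strongest statement of this type the paper can prove is $\rk^S(f)\le 2^{d-2}\rk^S(\partial_v f)$ (Theorem \ref{derivative-thm}(i)), and even that is deduced \emph{from} Theorem \ref{sym-main-thm}, so using anything of this kind as an ingredient is circular; moreover $2^{d-2}$ in place of $2$ destroys the claimed constant. The companion inequality $g'_{\sym}(\partial_{v_0}f)\le g'_{\sym}(f)$ is also unsubstantiated ($Z^{\sym}_f$ is not contained in $Z^{\sym}_{\partial_{v_0}f}$ pointwise, since the kernel condition on $H_f(x)$ does not pass to its derivative $\partial_{v_0}H_f(x)$), and your parenthetical that $g'_{\sym}(f)$ ``equals the generic rank of $H_f(x)$'' is incorrect: it equals $\codim_X Z_X+r$ where $r$ is the rank of $H_f$ along the subvariety $Z_X$, not the generic rank on $V$. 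Finally, the induction mishandles the characteristic hypothesis: degree $d-1$ would require $\mathrm{char}(\k)\nmid (d-1)(d-2)$, which does not follow from $\mathrm{char}(\k)\nmid d(d-1)$.

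The base case $d=3$ is also not a secondary obstacle: the bound $\rk^S(f)\le g'_{\sym}(f)$ for cubics is the full strength of the theorem there, and ``re-run Theorem \ref{main-thm} respecting $S_3$-symmetry'' is not an argument. What the paper actually does is a single direct (non-inductive) pass for all $d$, parallel to but distinct from the proof of Theorem \ref{main-thm}: decompose $g'_{\sym}(f)=c+r$ with $c=\codim_X Z_X$ and $r$ the rank of $H_f$ over $Z_X$; use the determinantal Lemma \ref{det-lem} to produce kernel sections $v_i(x)$; differentiate the identity $f^{(1,1,d-2)}(u,v_i(x),x)=0$ up to order $d-2$ at a smooth $\k$-point $x^0$; and conclude by substituting $u=v=x$, which via $d(d-1)f(x)=f^{(1,1,d-2)}(x,x,x)$ converts the resulting ideal membership into a Schmidt decomposition of $f$ with $r(1+\th^{\sym}_{d-2})+c(d-2)\le 2^{d-3}(r+c)$ terms. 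The saving over $C_d$ comes from counting ordered compositions $m_1+\cdots+m_s<d-2$ (giving $2^{d-3}$) instead of ordered families of disjoint subsets, and the hypothesis on $\mathrm{char}(\k)$ enters only in this final substitution. I would recommend abandoning the induction and following this direct route.
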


For $\k$ algebraically closed, we prove another version of theorem \ref{sym-main-thm} in the appendix with better bounds for $d\ge 6.$ The invariant $g_{\sym}(f)$ can be viewed as an invariant measuring singularities of the polar map
$x\mapsto (\partial_i f(x))_{1\le i\le \dim V}$ of $f$ (see Sec.\ \ref{sing-sec}).
We also prove that $g_{\sym}(f)$ is related to the codimension of the singular locus of the hypersurface $f=0$.
Namely, let us set
$$c(f):=\codim_V \Sing(f=0).$$
Assuming that $\mathrm{char}(\k)$ does not divide $2(d-1)$, we prove that
\begin{equation}\label{cf-ineq}
\begin{array}{c}
c(f)\le g_{\sym}(f)\le (d+1)\cdot c(f), \ \text{ for } d \ \text{ even},  \nonumber\\
c(f)\le g_{\sym}(f)\le d\cdot c(f), \ \text{ for } d \ \text{ odd}
\end{array}
\end{equation}
(see Proposition \ref{cf-ineq-prop}).

More generally, for a collection $\ov{f}=(f_1,\ldots,f_s)$, let us set
$$c(\ov{f}):=\codim_V \Sing(V(\ov{f})),$$
where $V(\ov{f})\sub V$ is the subscheme defined by the ideal $(f_1,\ldots,f_s)$.
We also consider the related invariant
$$c'(\ov{f}):=\codim_V S(\ov{f}),$$
where $S(\ov{f})\sub V$ is the locus where the Jacobi matrix of $(f_1,\ldots,f_s)$ has rank $<s$.
It is easy to see that
$$c'(\ov{f})\le c(\ov{f})\le c'(\ov{f})+s.$$

Here is our main result concerning the relation between the Schmidt rank and the codimension of the singular locus.
It can be viewed as a more precise version of the corresponding result in \cite{KZ} in the case of
an algebraically closed field of sufficiently large (or zero) characteristic, as well as an effective version of a result of Ananyan and Hochster (see \cite[Theorem A(a)]{AH}),
playing a central role in their proof of Stillman's conjecture.

\begin{theorem}\label{cf-thm}
Assume that $\mathrm{char}(\k)$ does not divide $d$. Let $c_\k(f)$ be the codimension in $V$ of the Zariski closure of the $\k$-points of $\Sing(f=0).$

\noindent
(i) We have
$$\frac{c(f)}{2} \le \rk^S(f)\le (d-1)\cdot c_\k(f).$$

\noindent
(ii) Assume $\k$ is algebraically closed. Then for a collection $\ov{f}=(f_1,\ldots,f_s),$ we have
$$\rk^S(\ov{f})\le (d-1)\cdot (c'(\ov{f})+s-1).$$
%
\end{theorem}

Combining Theorem \ref{cf-thm}(i) with \cite[Theorem A(c)]{AH} we get the following result.

\begin{cor}\label{AH-cor}
Assume $\k$ is algebraically closed and $\mathrm{char}(\k)$ does not divide $d!$. 
For $i=2,\ldots,d$, let $W_i\sub \k[V]_i$ be a subspace of forms of degree $i$. Set $W=\bigoplus_i W_i$, $w=\dim W$.
Assume that for some $m\ge 1$, one has
$$\rk^S(W_i)\ge (i-1)(m+2)+3(w-1), \text{ for } i=3,\ldots,d,$$
$$\rk^S(W_2)-1\ge \lceil \frac{m+1}{2}\rceil+3(w-1).$$
Then every sequence of linearly independent homogeneous forms in $W$ is regular and the corresponding complete intersection subscheme in $V$ satisfies Serre
condition $R_m$.
\end{cor}

Note that without any assumptions on the characteristic on $\k$ we are able to estimate in terms of $c(f)$
the rank of $H_f(x)(u,v)$ viewed as a polynomial in $(u,v,x)\in V\times V\times V$
(see Remark \ref{char-free-rem}).

For a homogeneous polynomial $f(x)$ of degree $d$ on $V$ and a vector $v\in V$, we denote by
$\partial_v f(x)$, the derivative of $f$ in the direction $v$. Our next result concerns $\partial_v f$ for generic $v$.

\begin{theorem}\label{derivative-thm}
Let $f$ be a homogeneous polynomial of degree $d\ge 3$.
Assume that $\k$ is algebraically closed of characteristic not dividing $(d-1)d$.

\noindent
(i) For generic $v\in V$, one has $\rk^S(\partial_v f)\ge 2^{2-d}\cdot \rk^S(f)$.

\noindent
(ii) For $s\le 2^{2-d}\cdot \rk^S(f)+\frac{1}{2}$ (resp., $s\le 2^{2-d}\cdot \rk^S(f)-\frac{1}{2}$), and for generic $v_1,\ldots,v_s\in V$, the derivatives $(\partial_{v_1}f,\ldots,\partial_{v_s}f)$ define a (resp., normal)
complete intersection of codimension $s$ in $V$.
\end{theorem}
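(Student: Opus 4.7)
The central observation driving both parts is the identification $g_{\sym}(f) = r$, where $r$ is the generic rank of the Hessian $H_f(x)$ as a bilinear form on $V$. Indeed, the projection $Z^{\sym}_f \to V$, $(v,x)\mapsto x$, is surjective with fiber $\ker H_f(x)$, so $\dim Z^{\sym}_f = 2n - r$ (with $n=\dim V$), and hence $g_{\sym}(f) = r$. Combined with Theorem~\ref{sym-main-thm}(i) and the assumption that $\k$ is algebraically closed, this gives $r \ge 2^{3-d}\rk^S(f)$.

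For part (i), the hypothesis $\mathrm{char}(\k)\nmid d-1$ and Euler's identity force $\partial_v f(x) = \tfrac{1}{d-1}H_f(x)(v,x) = 0$ whenever $v\in\ker H_f(x)$. Therefore the singular locus of the hypersurface $\{\partial_v f=0\}$ coincides with the fiber $\pi_1^{-1}(v)$ of the other projection $\pi_1:Z^{\sym}_f\to V$, $(v,x)\mapsto v$. For generic $v$ this fiber is either empty or has dimension at most $\dim Z^{\sym}_f - n = n-r$, so $c(\partial_v f)\ge r$. When $d\ge 4$, the lower bound $\tfrac12 c(g)\le\rk^S(g)$ of Theorem~\ref{cf-thm}(i) applied to $g=\partial_v f$ (of degree $d-1\ge 3$) yields $\rk^S(\partial_v f)\ge r/2\ge 2^{2-d}\rk^S(f)$. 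For $d=3$ the polynomial $\partial_v f$ is a quadratic form whose singular locus is $\ker H_f(v)$, and the elementary identity $\rk^S(q)=\lceil\rk(q)/2\rceil$ for quadrics combined with $\rk H_f(v)=r$ for generic $v$ gives the same bound.

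For part (ii), write $W=\sspan(v_1,\ldots,v_s)$ and let $\psi:V\to V^*$, $x\mapsto(\partial_i f(x))$, be the polar map. Then the scheme defined by $(\partial_{v_1}f,\ldots,\partial_{v_s}f)$ coincides with $\psi^{-1}(W^\perp)$, and since $d\psi_x=H_f(x)$ the closure $\overline{\psi(V)}$ is irreducible of dimension $r$. The hypothesis $s\le 2^{2-d}\rk^S(f)+\tfrac12$ forces $2s-1\le r$, hence $s\le r$, so by Bertini a generic codimension-$s$ subspace $W^\perp\subset V^*$ meets $\overline{\psi(V)}$ in codimension $s$; together with the generic fiber dimension $n-r$ of $\psi$, this gives $\codim_V \psi^{-1}(W^\perp)=s$, establishing the complete intersection claim. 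For normality under the stronger hypothesis $r\ge 2s+1$, the singular locus of the complete intersection is $\{x\in\psi^{-1}(W^\perp) : W\cap\ker H_f(x)\ne 0\}$. I would bound its dimension by introducing the universal incidence $\Sigma\subset V^s\times V$ of tuples $(v_1,\ldots,v_s,x)$ with $\partial_{v_i}f(x)=0$ and $W\cap\ker H_f(x)\ne 0$, stratifying the projection $\Sigma\to V$ by $r'=\rk H_f(x)$, and using (a) the Schubert codimension $r'-s$ (or $r'-s+1$ when $\nabla f(x)=0$) for the fiber condition on $(v_i)$, together with (b) the bound $\codim_V\{x:\rk H_f(x)\le r'\}\ge r-r'$ obtained from the fibration $Z^{\sym}_f\to V$. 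A uniform computation then yields $\dim\Sigma\le(s+1)n-r+s-1\le(s+1)n-s-2$, so the generic fiber of $\Sigma\to V^s$ has dimension at most $n-s-2$, and the singular locus has codimension $\ge 2$ in the complete intersection; Serre's criterion applied to this Cohen--Macaulay scheme gives normality. The main obstacle is this last dimension count, where the Schubert bound and the Hessian-rank stratification must combine exactly so that $r\ge 2s+1$ suffices, matching the hypothesis precisely.
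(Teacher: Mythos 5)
Your ``central observation'' $g_{\sym}(f)=r$, where $r$ is the generic rank of $H_f$, is false: the projection $Z^{\sym}_f\to X$ is surjective with generic fiber of dimension $n-r$, but $Z^{\sym}_f$ may have larger components sitting over loci where the Hessian rank drops, so one only gets $g_{\sym}(f)\le r$. The paper states this explicitly ($\rk^{TB}(\phi)\le r$ ``can be strict'') and gives the example $q_1(x)q_2(y)$, for which $g_{\sym}\le 4$ while the generic Hessian rank is $2n$. For part (i) this is harmless once repaired: running your fiber-dimension argument for $\pi_1:Z^{\sym}_f\to V$ with the correct value $\dim Z^{\sym}_f=2n-g_{\sym}(f)$ gives $c(\partial_v f)\ge g_{\sym}(f)$ for generic $v$ (this is Lemma \ref{deriv-codim-lem} with $s=1$), and combining with $\tfrac12 c(\partial_v f)\le \rk^S(\partial_v f)$ (which needs no characteristic or degree hypothesis) and Theorem \ref{sym-main-thm} yields (i); your $d=3$ detour is unnecessary. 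So (i) is essentially the paper's proof after substituting $g_{\sym}(f)$ for $r$ throughout.

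Part (ii) has two genuine gaps. First, the Bertini step only controls the part of $\psi^{-1}(W^\perp)$ lying over the open stratum of $\overline{\psi(V)}$ where fibers of $\psi$ have dimension $n-r$. Since $W^\perp$ is linear it always contains $0$, so $\psi^{-1}(W^\perp)\supseteq\psi^{-1}(0)$, and more generally the preimages of the jump loci of $\psi$ are not bounded by your hypotheses: the best available lower bound $c(f)\ge g_{\sym}(f)/(d+1)$ does not give $\codim_V\psi^{-1}(0)\ge s$ when $s$ is as large as $(r+1)/2$. Second, the normality argument is explicitly left unfinished (``the main obstacle is this last dimension count''), and that count is the substance of the proof. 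The paper's route avoids both problems: it bounds from below $c'(\partial_{v_1}f,\ldots,\partial_{v_s}f)$ --- the codimension of the locus where the Jacobian of the \emph{collection} drops rank, not of the zero locus itself --- by $g_{\sym}(f)-s+1$, via the incidence variety $\wt{Z}^{(s)}$ fibered over $Z^{\sym}_f$ (essentially your $\Sigma$, but with the count carried out, and anchored to $g_{\sym}(f)$ rather than $r$); then the elementary Jacobian-criterion argument from the proof of Theorem \ref{cf-thm}(iii) converts $c'\ge s$ into the complete-intersection statement and $c'\ge s+2$ into normality via Serre's criterion. Controlling $S(\ov{g})$ rather than $V(\ov{g})$ is exactly what sidesteps the $\psi^{-1}(0)$ issue, so I recommend reorganizing part (ii) around that lemma.
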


In the appendix we prove another version of theorem \ref{derivative-thm} with better bounds for $d\ge 6.$ In Section \ref{sing-polar-sec} we will also discuss the relation of the invariant $g_{\sym}(f)$ with
the polar map of $f$ and with the Gauss map of the corresponding projective hypersurface.

\section{Schmidt rank of polylinear forms}

\subsection{Elementary observations}

First, let us prove \eqref{trivial-ineq} and its symmetric version \eqref{sym-trivial-ineq}.
We denote by $\k[V]$ the space of polynomial functions on a vector space $V$ and by $\k[V]_d\sub \k[V]$ the subspace
of homogeneous polynomials of degree $d$.

\begin{lemma}\label{trivial-ineq-lem}
(i) For $P\in V_1^*\ot\ldots\ot V_d^*$ one has $g(P)\le \rk^S(P)$.

\noindent
(ii) For $f\in \k[V]_d$ one has $g_{\sym}(f)\le 4\rk^S(f)$.
\end{lemma}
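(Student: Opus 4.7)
Both parts follow the same template: given a Schmidt-rank decomposition, I construct an explicit subvariety inside $Z_P$ (respectively, $Z^{\sym}_f$) cut out by a number of equations proportional to the rank, and then conclude by a codimension count. In (i) the equations come straight from the decomposition of $P$, and in (ii) they come from applying the Leibniz rule to each summand $g_ih_i$.

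For (i), fix a decomposition $P=\sum_{i=1}^r P_{I_i}\cdot P_{J_i}$ realising $r=\rk^S(P)$. Since $I_i\sqcup J_i=[1,d]$, exactly one of $I_i,J_i$ contains $1$; relabelling if necessary, I may assume $1\in I_i$ for every $i$, so that $J_i\sub[2,d]$ is nonempty and $P_{J_i}$ is a polylinear form on $\prod_{b\in J_i}V_b$. Let $Z\sub V_2\times\ldots\times V_d$ be the common vanishing locus of the $r$ forms $P_{J_1},\ldots,P_{J_r}$. Then $\codim_{V_2\times\ldots\times V_d}Z\le r$, and on $Z$ every summand in $P(v_1,v_2,\ldots,v_d)=\sum_i P_{I_i}(v_{I_i})\,P_{J_i}(v_{J_i})$ is killed by its $P_{J_i}$-factor for every $v_1\in V_1$, so $Z\sub Z_P$ and $g(P)\le r$.

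For (ii), fix a decomposition $f=\sum_{i=1}^r g_ih_i$ with $g_i,h_i$ homogeneous of positive degree. Two applications of the Leibniz rule give
\begin{multline*}
H_{g_ih_i}(x)(u,v)=h_i(x)\,H_{g_i}(x)(u,v)+g_i(x)\,H_{h_i}(x)(u,v)\\
+\lan dg_i(x),u\ran\lan dh_i(x),v\ran+\lan dg_i(x),v\ran\lan dh_i(x),u\ran,
\end{multline*}
and summing over $i$ yields $H_f$. Define $Z\sub V\times V$ by the $4r$ equations $g_i(x)=0$, $h_i(x)=0$, $\lan dg_i(x),v\ran=0$, $\lan dh_i(x),v\ran=0$ for $i=1,\ldots,r$, so $\codim_{V\times V}Z\le 4r$. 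At any $(v,x)\in Z$ each of the four summands above vanishes for every $i$ and every $u$, so the linear functional $u\mapsto H_f(x)(u,v)$ is identically zero, i.e.\ $v\in\ker H_f(x)$. Hence $Z\sub Z^{\sym}_f$ and $g_{\sym}(f)\le 4r$.

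The only delicate point is bookkeeping in positive characteristic: the differential $dg_i$ may vanish identically (for instance if $g_i$ is a $p$-th power), but then the second partials $\partial_a\partial_b g_i$ also vanish, so the first, third and fourth summands associated with that index are identically zero and the single equation $g_i(x)=0$ already suffices. Thus the bound $4r$ is never exceeded, and no further subtlety arises.
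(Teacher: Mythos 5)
Your proof is correct and follows essentially the same route as the paper's: part (i) is identical, and in part (ii) your explicit $4r$ equations $g_i(x)=h_i(x)=\lan dg_i(x),v\ran=\lan dh_i(x),v\ran=0$ cut out exactly the locus the paper describes as the preimage in $Z^{\sym}_f$ of $Y=V(g_1,\ldots,g_r,h_1,\ldots,h_r)$, with your Leibniz expansion being the computation behind the paper's remark that the subspace cut out by the differentials lies in $\ker H_f(x)$. The closing paragraph on positive characteristic is harmless but unnecessary, since a locus defined by $N$ equations has codimension at most $N$ even when some equations are trivial.
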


\begin{proof} (i) If $r=\rk^S(P)$ then there exists a decomposition
$$P=\sum_{i=1}^r P_{I_i}\cdot Q_{J_i}$$
as in Definition \ref{S-rank-def}. Swapping some $I_i$ with $J_i$ if necessary, we can assume that
$1\in I_i$ for all $i$. Then the intersection of $r$ hypersurfaces $Q_{J_i}=0$ in $V_2\times\ldots\times V_d$
is contained in $Z_P$ and has codimension $\le r$.

\noindent
(ii) If we have a decomposition $f=\sum_{i=1}^r g_ih_i$ then over the subvariety $Y=V(g_1,\ldots,g_r,h_1,\ldots,h_r)\sub V$ the symmetric form $H_f(x)$ has rank
$\le 2r$: the subspace cut out by $dg_1|_x,\ldots,dg_r|_x,dh_1|_x,\ldots,dh_r|_x$ is contained in its kernel. Since $\codim_V Y\le 2r$, the preimage of $Y$ in $Z^{\sym}_f$
has codimension $\le 4r$ in $V\times V$.
\end{proof}

For a subset of indices $I=\{i_1<\ldots<i_s\}\sub [1,d]$, let us set 
$$V_I:=V_{i_1}\ot\ldots\ot V_{i_s}.$$
We have the following simple observation.

\begin{lemma}\label{elem-lem}
Let $V'_1\sub V_1$ be a subspace of codimension $c$, 
and let $(\ell_1,\ldots,\ell_r)$ be a basis of the orthogonal to $V'_1$ in $V^*_1$.
Suppose we have tensors 
$$P_{I_s}\in V_1^*\ot V_{I_s}^*, \ \ Q_{J_t}\in V_{J_t}^*$$ 
for some subsets $I_1,\ldots,I_r,J_1,\ldots,J_p\sub [2,\ldots,d]$, such that
$P|_{V'_1\times V_2\times\ldots V_d}$ belongs to
the tensor ideal 
$$(P_{I_s}|_{V'_1\ot V_{I_s}},Q_{J_t} \ |\ s=1,\ldots,r; t=1,\ldots,p).$$
Then $P$ belongs to the tensor ideal 
$$((\ell_i \ |\ i=1,\ldots,c),(P_{I_s},Q_{J_t} \ |\ s=1,\ldots,r; t=1,\ldots,p)).$$
In particular,
$$\rk^S(P)\le \rk^S(P|_{V'_1\times V_2\times\ldots\times V_d})+c.$$
\end{lemma}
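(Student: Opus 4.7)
The strategy is to produce an honest lift of the given decomposition from $V'_1\times V_2\times\ldots\times V_d$ to the full product, and to absorb the discrepancy into the kernel of the restriction map, which is exactly spanned by the $\ell_i$. First I would unpack the hypothesis: by the definition of the tensor ideal (Definition \ref{S-rank-def}(ii)) applied to the polylinear forms on $V'_1\times V_2\times \ldots\times V_d$, one has
$$P|_{V'_1\times V_2\times\ldots\times V_d}=\sum_{s=1}^r \bigl(P_{I_s}|_{V'_1\ot V_{I_s}}\bigr)\cdot R_s+\sum_{t=1}^p Q_{J_t}\cdot S_t,$$
where $R_s$ is a polylinear form on $\prod_{a\in[2,d]\setminus I_s}V_a$ (no $V_1$-factor at all) and $S_t$ is a polylinear form on $V'_1\times\prod_{a\in[2,d]\setminus J_t}V_a$.

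Next I would lift. The $R_s$ involve no factor from $V_1$, so $P_{I_s}\cdot R_s$ already makes sense as a tensor on $V_1\times V_2\times\ldots\times V_d$ (with $P_{I_s}\in V_1^*\ot V_{I_s}^*$ the given extension) and its restriction to $V'_1\times V_2\times\ldots\times V_d$ is the corresponding term. For each $S_t$ I would choose any extension $\widetilde S_t$ to a polylinear form on $V_1\times\prod_{a\in[2,d]\setminus J_t}V_a$ (e.g. using a complement of $V'_1$ in $V_1$). Setting
$$\widetilde P:=\sum_{s=1}^r P_{I_s}\cdot R_s+\sum_{t=1}^p Q_{J_t}\cdot \widetilde S_t,$$
the tensor $\widetilde P$ lies in the tensor ideal $(P_{I_s},Q_{J_t})$, and $P-\widetilde P$ restricts to $0$ on $V'_1\times V_2\times\ldots\times V_d$.

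Now I would use the linear-algebra fact that the kernel of the restriction map
$V_1^*\ot V_2^*\ot\ldots\ot V_d^*\to (V'_1)^*\ot V_2^*\ot\ldots\ot V_d^*$
equals $(V'_1)^{\perp}\ot V_2^*\ot\ldots\ot V_d^*$, which, since $(V'_1)^{\perp}$ has basis $\ell_1,\ldots,\ell_c$, is precisely the tensor ideal $(\ell_1,\ldots,\ell_c)$. Therefore $P-\widetilde P=\sum_{i=1}^c \ell_i\cdot T_i$ for suitable $T_i$ on $V_2\times\ldots\times V_d$, which, together with the formula for $\widetilde P$, exhibits $P$ as an element of the ideal $(\ell_1,\ldots,\ell_c,P_{I_s},Q_{J_t})$.

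For the rank estimate, let $r=\rk^S(P|_{V'_1\times V_2\times\ldots\times V_d})$ and write the restriction as a sum of $r$ Schmidt-rank-$1$ tensors; each summand is of the form $A\cdot B$ where either $A$ involves the $V'_1$-factor (in which case it contributes a ``$P_{I_s}$''-type term) or $A$ does not (a ``$Q_{J_t}$''-type term). Applying the first part with these rank-$1$ building blocks produces a presentation of $P$ as a sum of $r$ rank-$1$ tensors (the extensions of the rank-$1$ pieces stay rank $1$, since extending a factor polylinearly preserves the product structure) plus the $c$ rank-$1$ terms $\ell_i\cdot T_i$, yielding $\rk^S(P)\le r+c$. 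The one step requiring care is observing that the extensions $\widetilde S_t$ may be chosen so as not to destroy the rank-$1$ structure of each summand; I expect this to be the only mildly subtle point, and it is immediate once each rank-$1$ term is lifted separately rather than all at once.
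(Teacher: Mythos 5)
Your proof is correct and follows essentially the same route as the paper's: the paper's entire argument is the observation that the kernel of the restriction map $(V_1\otimes\ldots\otimes V_d)^*\to (V'_1\otimes\ldots\otimes V_d)^*$ is exactly the tensor ideal $(\ell_1,\ldots,\ell_c)$, and your write-up just makes the lifting step and the rank count explicit.
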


\begin{proof}
This follows immediately from the fact that the tensor ideal $(\ell_i \ |\ i=1,\ldots,c)$ is exactly the kernel of the
restriction map 
$$(V_1\otimes V_2\otimes \ldots\otimes V_d)^*\to (V'_1\otimes V_2\otimes \ldots\otimes V_d)^*.$$
\end{proof}

\subsection{Determinantal construction}

Let $f:\VV_1\to \VV_2$ be a morphism of vector bundles on a scheme $X$.
For every $r\ge 0$, we have a natural morphism
$$\kappa_r:{\bigwedge}^r\VV_2^\vee\ot {\bigwedge}^{r+1}\VV_1\to \VV_1:
(\phi_1\we\ldots\we\phi_r)\ot \a\mapsto \iota_{f^\vee\phi_1}\ldots\iota_{f^\vee\phi_r}\a,$$
where for a section $\psi$ of $\VV^\vee$ we denote by $\iota_\psi:\bigwedge^i\VV\to \bigwedge^{i-1}\VV$
the corresponding contraction operator.

\begin{lemma}\label{det-lem}
(i) Assume that ${\bigwedge}^{r+1}f=0$. Then the image of $\kappa_r$ is contained in $\ker(f)$.

\noindent
(ii) Assume in addition that $\VV_1$ and $\VV_2$ are trivial vector bundles and that for some point $x\in X$,
the rank of $f(x):\VV_1|_x\to \VV_2|_x$ is equal to $r$. Let $n=\rk \VV_1$.
Then there exist $n-r$ global sections $s_1,\ldots,s_{n-r}$ of $\VV_1$ such that $f(s_i)=0$ for all $i$, and
$s_1(x),\ldots,s_{n-r}(x)$ is a basis of $\ker f(x)$.
\end{lemma}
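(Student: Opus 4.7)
For part (i), the plan is to test $f(\kappa_r(\xi\otimes\a))$ against an arbitrary covector $\phi_{r+1}\in\VV_2^\vee$ and rewrite the resulting scalar as a full $(r+1)$-fold contraction of $\a$. Concretely, for $\xi=\phi_1\we\ldots\we\phi_r$ one has
$$\lan \phi_{r+1},f(\iota_{f^\vee\phi_1}\ldots\iota_{f^\vee\phi_r}\a)\ran=\lan f^\vee\phi_{r+1},\iota_{f^\vee\phi_1}\ldots\iota_{f^\vee\phi_r}\a\ran=\lan f^\vee\phi_1\we\ldots\we f^\vee\phi_{r+1},\a\ran,$$
using the standard identity relating iterated interior products to pairing with an exterior product. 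The right-hand side equals $\lan {\bigwedge}^{r+1}(f^\vee)(\phi_1\we\ldots\we\phi_{r+1}),\a\ran$, which vanishes since ${\bigwedge}^{r+1} f=0$ (and hence ${\bigwedge}^{r+1}f^\vee=0$). Since $\phi_{r+1}$ was arbitrary, $f(\kappa_r(\xi\otimes\a))=0$.

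For part (ii), the idea is to construct $\kappa_r$-preimages of a chosen basis of $\ker f(x)$ using constant sections, which is permitted because $\VV_1,\VV_2$ are trivial. Since $f(x)^\vee$ has rank $r$, I can pick constant sections $\phi_1,\ldots,\phi_r$ of $\VV_2^\vee$ such that the covectors $\eta_j:=f^\vee\phi_j|_x\in\VV_1^\vee|_x$ are linearly independent; they then span the annihilator of $\ker f(x)$. Extend $\eta_1,\ldots,\eta_r$ to a basis $\eta_1,\ldots,\eta_n$ of $\VV_1^\vee|_x$, and let $e_1,\ldots,e_n$ be the dual basis. Because $e_{r+1},\ldots,e_n$ are killed by every element of $\ker(f(x))^\perp$, they form a basis of $\ker f(x)$.

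For each $k\in\{r+1,\ldots,n\}$, view $e_k$ and $e_1,\ldots,e_r$ as constant sections of $\VV_1$ via the trivialization, form the constant section $\a_k:=e_k\we e_1\we\ldots\we e_r$ of ${\bigwedge}^{r+1}\VV_1$, and set
$$s_k:=\kappa_r\bigl((\phi_1\we\ldots\we\phi_r)\ot\a_k\bigr).$$
A direct computation of the iterated interior product (each $\iota_{\eta_j}$ picks out the $e_j$ slot in $\a_k$, contributing only signs) yields $s_k(x)=\pm e_k$, so $s_{r+1}(x),\ldots,s_n(x)$ form a basis of $\ker f(x)$. Part (i) applied globally then gives $f(s_k)=0$ on all of $X$, which is the desired family of $n-r$ sections.

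The only real subtlety is the sign bookkeeping in the iterated contraction and the observation that one may work with constant sections because the bundles are trivial; once those are in place, the argument is entirely formal. I would therefore expect the exposition to be more delicate than the mathematics, with the natural pairing identity between $\iota_{\psi_1}\ldots\iota_{\psi_r}\a$ and $\psi_{r+1}$ serving as the single key computation tying (i) and (ii) together.
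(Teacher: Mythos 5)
Your proof is correct and follows essentially the same route as the paper: part (i) is the same observation that the full $(r+1)$-fold contraction computes $\bigwedge^{r+1}f^\vee$ applied to $\phi_1\we\ldots\we\phi_{r+1}$, and part (ii) exploits triviality to feed constant sections into $\kappa_r$ and evaluate the contraction at $x$ (the paper packages this as a restriction of $\kappa_r$ to a trivial subbundle $\bigwedge^r\WW_2^\vee\ot\bigwedge^r\WW_1\ot\KK$, which amounts to the same basis computation you carry out explicitly).
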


\begin{proof}
(i) This is equivalent to the statement that $\iota_{f^\vee\phi_{r+1}}\kappa_r(\a)=0$ for any local section $\phi_{r+1}$
of $\VV_2^{\vee}$. But $\iota_{f^\vee\phi_1}\ldots\iota_{f^\vee\phi_r}\iota_{f^\vee\phi_{r+1}}=0$ since
${\bigwedge}^{r+1}f^\vee=0$.

\noindent
(ii) Since $\VV_1$ and $\VV_2$ are trivial, we can choose splittings $\VV_1=\KK\oplus \WW_1$,
$\VV_2=\CC\oplus \WW_2$ into trivial subbundles, 
such that $\KK|_x=\ker f(x)$, $\WW_2|_x=\im f(x)$, and $f(x):\WW_1|_x\to \WW_2|_x$
is an isomorphism. Let us consider the composed map
$$s:{\bigwedge}^r\WW_2^\vee\ot ({\bigwedge}^r\WW_1\ot \KK)\to 
{\bigwedge}^r\VV_2^\vee\ot {\bigwedge}^{r+1}\VV_1\xrightarrow{\kappa_r} \VV_1.$$
Then $f\circ s=0$ and the image of $s(x)$ is exactly $\ker f(x)$.
Choosing a trivialization of the target of $s$, we can write $s$ as a collection of global sections of $\VV_1$, which has the
required properties.
\end{proof}

\subsection{Higher derivatives}

Let $V$ be a finite dimensional vector space, and let $\k[V]$ denote the ring of polynomial functions on $V$.

For each $f\in \k[V]$, each $n\ge 1$ and $v_0\in V$, 
we define the homogeneous form of degree $n$ on $V$, $f^{(n)}_{v_0}(v)$ as the
$n$th graded component of $f(v+v_0)\in \k[V]$ (viewed as a function of $v$, for fixed $v_0$)
with respect to the degree grading on $\k[V]$, so that we have 
(finite) Taylor's decomposition
$$f(v+v_0)=\sum_{n\ge 0} f^{(n)}_{v_0}(v).$$
We refer to $f^{(n)}_{v_0}$ as the $n$th derivative of $f$ at $v_0$.

\begin{lemma}\label{deriv-ideal-lem}
Let $X\sub V$ be an irreducible closed subvariety of codimension $c$, $v_0\in X$ a smooth $\k$-point. 
Let $g_1,\ldots,g_c$ be a set of elements in the ideal $I_X$ of $X$, with linearly independent differentials at $v_0$.
Then for any $f\in I_X$ and any $n\ge 1$, the form $f^{(n)}_{v_0}\in \k[V]$ belongs to the ideal in $\k[V]$ generated by
$((g_i)^{(j)}_{v_0})_{i=1,\ldots,c;1\le j\le n}$.
\end{lemma}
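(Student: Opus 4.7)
The plan is to reduce the claim to a local Taylor expansion at $v_0$. I would first show that $g_1,\ldots,g_c$ generate the ideal $I_X$ in the local ring $\OO_{V,v_0}$, write $f=\sum h_i g_i$ locally, pass to the completion, and then compare the degree-$n$ homogeneous components of both sides of this identity.

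For the local generation step, I would use the hypothesis that the differentials $dg_i|_{v_0}$ are linearly independent to conclude that $g_1,\ldots,g_c$ extend to a regular system of parameters in the regular local ring $\OO_{V,v_0}$. Hence the quotient $\OO_{V,v_0}/(g_1,\ldots,g_c)$ is itself regular of dimension $\dim V - c$, so in particular an integral domain, and $(g_1,\ldots,g_c)\OO_{V,v_0}$ is a prime ideal of height $c$. Since $I_{X,v_0}$ is also a prime of height $c$ (as $X$ is irreducible of codimension $c$ through $v_0$) and contains $(g_1,\ldots,g_c)$, the two ideals must coincide. This yields a local presentation $f = \sum_{i=1}^c h_i g_i$ with $h_i\in\OO_{V,v_0}$.

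Next I would choose linear coordinates on $V$ vanishing at $v_0$ and pass to the completion $\hat{\OO}_{V,v_0}\cong\k[[x_1,\ldots,x_N]]$, where $N=\dim V$. Each $h_i$ has a formal Taylor expansion $h_i=\sum_{k\ge 0}(h_i)^{(k)}_{v_0}$ whose homogeneous components lie in $S$, and $g_i=\sum_{j\ge 1}(g_i)^{(j)}_{v_0}$ with no constant term since $g_i(v_0)=0$. Multiplying out the local identity $f=\sum_i h_i g_i$ inside $\hat{\OO}_{V,v_0}$ and extracting the degree-$n$ part gives
\[
f^{(n)}_{v_0} \;=\; \sum_{i=1}^c \sum_{j=1}^{n} (h_i)^{(n-j)}_{v_0}\cdot (g_i)^{(j)}_{v_0},
\]
which exhibits $f^{(n)}_{v_0}$ as an element of the ideal in $S$ generated by $\bigl((g_i)^{(j)}_{v_0}\bigr)_{i=1,\ldots,c;\, 1\le j\le n}$.

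The only real obstacle is the first step, namely the local identification $I_{X,v_0}=(g_1,\ldots,g_c)\OO_{V,v_0}$, which is where the smoothness of $v_0\in X$ together with the codimension and irreducibility hypotheses are used. Once this is secured, the remainder is a routine comparison of coefficients in a formal power series ring.
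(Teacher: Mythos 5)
Your proposal is correct and follows essentially the same strategy as the paper: establish that $g_1,\ldots,g_c$ generate the ideal of $X$ locally (equivalently, formally) at $v_0$, write $f=\sum_i h_ig_i$, and extract the degree-$n$ Taylor component, noting that each $g_i$ has no constant term. The only difference is cosmetic: you justify the local generation by observing that $(g_1,\ldots,g_c)$ is a prime of height $c$ contained in the height-$c$ prime $I_{X,v_0}$, whereas the paper argues that $S_{\frak m}/(g_1,\ldots,g_c)\to\OO_{X,v_0}$ is an isomorphism on tangent spaces of regular local rings and hence on completions; both are valid.
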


\begin{proof} Without loss of generality we can assume that $v_0=0$. Set $A=\k[V]$, and let $\hat{A}$ denote the completion with respect
to the ideal of the origin (the ring of formal power series). Then the key point is that 
$I_X\cdot \hat{A}$ is generated by $g_1,\ldots,g_c$. Indeed, this follows from the fact that
the local homomorphism of local regular $\k$-algebras $A_{\frak{m}}/(g_1,\ldots,g_c)\to \OO_{X,v_0}$ (where $\frak{m}$
is the maximal ideal of $v_0$ in $A$) induces an
isomorphism on tangent spaces, so it induces an isomorphism of completions. Note that higher derivatives make sense
for elements of $\hat{A}$ (as components in $A_n=\k[V]_n$), so the assertion follows once we express 
any element of $I_X$ in the form $\sum_i g_i h_i$ for some $h_i\in \hat{A}$.
\end{proof}

We also need to work with certain polylinear forms of mixed derivatives.
Assume that we have a decomposition $V=V_1\oplus\ldots\oplus V_n$. 
Then we have the induced direct sum decomposition
$$\k[V]_m=\bigoplus_{m_1+\ldots+m_n=m} \k[V_1]_{m_1}\ot\ldots\ot \k[V_n]_{m_n}.$$ 

Now for $f\in \k[V]_m$ with $m\le n$, and a subset of indices $1\le i_1<\ldots<i_m\le n$,
we denote by $f^{(V_{i_1},\ldots,V_{i_m})}$ the component of $f$ in $\k[V_{i_1}]_1\ot\ldots\ot \k[V_{i_m}]_1$.
In particular, when we apply this to the $m$th derivative of $f$ at $v_0$, we get a polylinear form
\begin{equation}\label{mixed-der-eq}
f^{(V_{i_1},\ldots,V_{i_m})}_{v_0}:=(f^{(m)}_{v_0})^{(V_{i_1},\ldots,V_{i_m})}\in V_{i_1}^*\ot\ldots\ot V_{i_m}^*,
\end{equation}
which we call the {\it $(V_{i_1},\ldots,V_{i_m})$-mixed derivative of $f$ at $v_0$}.

\begin{lemma}\label{mixed-deriv-ideal-lem}
In the situation of Lemma \ref{deriv-ideal-lem}, assume in addition that $V=V_1\oplus\ldots\oplus V_n$.
Then for any $f\in \k[V]$, and any collection of indices
$I=\{i_1<\ldots<i_m\}\sub [1,n]$, the polylinear form $f^{(V_{i_1},\ldots,V_{i_m})}_{v_0}$ belongs
to the tensor ideal generated by $(g_i)^{(V_{j_1},\ldots,V_{j_s})}_{v_0}$, for $i=1,\ldots,c$ and
$J=\{j_1<\ldots<j_s\}\sub I$, $J\neq\emptyset$.
\end{lemma}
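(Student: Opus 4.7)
The strategy is to deduce the statement from Lemma \ref{deriv-ideal-lem} by extracting the appropriate multi-homogeneous component on both sides. The whole argument is essentially formal given the previous lemma.

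First, I apply Lemma \ref{deriv-ideal-lem} with $n$ replaced by $m=|I|$, which yields an expression
$$f^{(m)}_{v_0}=\sum_{i=1}^c\sum_{j=1}^m (g_i)^{(j)}_{v_0}\cdot h_{ij}$$
for some $h_{ij}\in S$; after projecting onto the component of total degree $m$, I may assume that each $h_{ij}$ is homogeneous of total degree $m-j$.

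Next, I use the multi-grading on $S=\k[V]$ induced by the decomposition $V=V_1\oplus\ldots\oplus V_n$. For a homogeneous $\varphi\in S$ and a subset $A\sub[1,n]$, write $\varphi^{V_A}$ for the component of $\varphi$ lying in $\bigotimes_{a\in A}\k[V_a]_1$ (and of degree $0$ on $V_b$ for $b\notin A$). With this notation the mixed derivative of \eqref{mixed-der-eq} is exactly $(g_i)^{V_A}_{v_0}=((g_i)^{(|A|)}_{v_0})^{V_A}$, and the bi-grading of $\k[V_A]\ot\k[V_B]$ yields the Leibniz-type identity
$$(\varphi\psi)^{V_I}=\sum_{A\sqcup B=I}\varphi^{V_A}\cdot\psi^{V_B}.$$

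Finally, I apply this identity to each summand $(g_i)^{(j)}_{v_0}\cdot h_{ij}$ and collect the $V_I$-multilinear component of the resulting expression for $f^{(m)}_{v_0}$. Since $(g_i)^{(j)}_{v_0}$ is homogeneous of total degree $j$, its only nonzero $V_A$-component with $A\sub I$ occurs when $|A|=j$, and the constraint $j\ge 1$ from Lemma \ref{deriv-ideal-lem} forces $A\neq\emptyset$. The resulting formula reads
$$f^{V_{i_1},\ldots,V_{i_m}}_{v_0}=\sum_{i=1}^c\;\sum_{\emptyset\neq A\sub I} (g_i)^{V_A}_{v_0}\cdot (h_{i,|A|})^{V_{I\setminus A}},$$
which exhibits $f^{V_{i_1},\ldots,V_{i_m}}_{v_0}$ as an element of the tensor ideal generated by the $(g_i)^{V_J}_{v_0}$ for nonempty $J\sub I$, since each $(h_{i,|A|})^{V_{I\setminus A}}$ is a polylinear form on $\prod_{b\in I\setminus A}V_b$. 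The only care needed is bookkeeping of multi-degrees; I do not anticipate a genuine obstacle, as the claim is essentially the restriction of Lemma \ref{deriv-ideal-lem} to a single multi-graded component of $S$.
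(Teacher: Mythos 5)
Your proof is correct and is exactly the argument the paper intends: the paper simply states that the lemma ``follows easily from Lemma \ref{deriv-ideal-lem},'' and your write-up (extract the degree-$m$ part of the ideal membership, then take the $V_I$-multilinear component using the Leibniz-type decomposition of a product in the multi-grading, noting that $(g_i)^{(j)}_{v_0}$ with $j\ge 1$ can only contribute nonempty $A$) supplies precisely the omitted details.
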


\begin{proof} This follows easily from Lemma \ref{deriv-ideal-lem}.
\end{proof}

\subsection{Dimension count}

Let us change the notation to
$$P:U\times V\times W_1\times\ldots\times W_{d-2}\to \k.$$
We will denote $W=W_1\times\ldots\times W_{d-2}$,
and consider the variety $Z_P\sub V\times W$ of all $(v,w)$ such that $P(u,v,w)=0$ for all $u\in U$.

Let $Z$ be an irreducible component of the Zariski closure of the set of $\k$-points
$Z_P(\k)$ (with reduced scheme structure), such that $\codim_{V\times W} Z=g'(P)$, and let 
$Z_W\sub W$ denote the closure of the image of $Z$ under the projection $\pi_W:V\times W\to W$ (also with reduced scheme
structure). Then $\k$-points are dense in $Z_W$.

We can think of $P$ as a linear map from $U\ot V$ to the space of polynomial functions on $W$, hence, it
gives a morphism of trivial vector bundles over $W$,
\begin{equation}\label{P_W-morphism}
P_W:V\ot \OO_W\to U^*\ot \OO_W,
\end{equation}
and for $w\in Z_W$, $\pi_W^{-1}(w)\cap Z_P$ can be identified with $\ker(P_W(w))$. 

Let $\UU\sub Z_W$ denote the nonempty open subset where $P_W$ has maximal rank that we denote by $r$.
Then over $\UU$ the cokernel of $P_W$ is locally free over $Z_W$, hence, the kernel of $P_W$ is a subbundle
$\KK\sub V\ot \OO$. Denoting by $\tot_{\UU}(\KK)$ the total space of the bundle $\KK$ over $\UU$, we have
$$\tot_{\UU}(\KK)=\pi_W^{-1}(\UU)\cap Z_P\sub V\times W.$$ 
Note that $\k$-points are dense in $\tot_{\UU}(\KK)=\pi_W^{-1}(\UU)\cap Z_P$, so
$\pi_W^{-1}(\UU)\cap Z$ is an irreducible component in $\pi_W^{-1}(\UU)\cap Z_P$.
Since $\tot_{\UU}(\KK)$ is irreducible, we get
$$\pi_W^{-1}(\UU)\cap Z=\tot_{\UU}(\KK).$$

Hence, we have $\dim Z=\dim Z_W+\dim V-r$, or equivalently,
\begin{equation}\label{dim-count-eq}
\codim_W Z_W+r=\codim_{V\times W}Z=g'(P).
\end{equation}

\subsection{Proof of Theorem \ref{main-thm}} \hfill

\medskip

\noindent
{\bf Step 1. Choosing a general $\k$-point}. 
Shrinking the open subset $\UU\sub Z_W$ above, 
we can assume that $\UU$ is smooth. Since $\k$-points are dense in $Z_W$
we can choose a $\k$-point
$$w^0=(w^0_1,\ldots,w^0_{d-2})\in \UU\sub Z_W.$$

Let us set
$$S_V:=\ker(P_W(w^0):V\to U^*), \ \ S_U:=\ker(P_W(w^0)^*:U\to V^*).$$

\medskip

\noindent
{\bf Step 2. The first set of key tensors}. Set 
$$c:=\codim_W Z_W.$$
Since $w^0$ is a smooth point of $Z_W$, we can choose $c$ elements $g_1,\ldots,g_c$ in the ideal $I_{Z_W}\sub \k[W]$
with linearly independent derivatives at $w^0$. 
Now we recall that $W=W_1\times\ldots\times W_{d-2}$.
Thus, for each $a=1,\ldots,c$, and each nonempty subset of indices $I=\{i_1<\ldots<i_m\}\sub [1,d-2]$, we can
consider the polylinear forms, obtained as mixed derivatives at $w^0$,
$$g_{a,I}:=g_{a,w^0}^{(W_{i_1},\ldots,W_{i_m})}\in W_{i_1}^*\ot\ldots\ot W_{i_m}^*.$$

\medskip

\noindent
{\bf Step 3. Setting up the key identity.} 
Let us set $k=\dim V-r$.
Applying Lemma \ref{det-lem}(ii) to the morphism of trivial vector
bundles \eqref{P_W-morphism} over $Z_W$, we find global sections $v_1(w),\ldots,v_k(w)\in V\ot \k[Z_W]$, such that
$v_1(w^0),\ldots,v_k(w^0)$ form a basis of $S_V$, and
$$P(u,v_i(w),w)=0 \ \text{ for any } u\in U \ \text{ and } w\in Z_W, \ i=1,\ldots,k.$$ 
Since $\k[W]\to \k[Z_W]$ is surjective we can lift $v_i(w)$ to polynomials in $V\ot \k[W]$, which we denote in the same way.
Now we define a collection of $U^*$-valued polynomials on $W$,
\begin{equation}\label{key-identity}
f_i(w):=P(u,v_i(w),w)\in U^*\ot \k[W].
\end{equation}
By construction, all $f_i(w)$ belong to $U^*\ot I_{Z_W}\sub U^*\ot \k[W]$.
Equation \eqref{key-identity} will be the key identity that we will use.

\medskip

\noindent
{\bf Step 4. The second set of key tensors}.
We will consider certain mixed derivatives of $v_i(w)$, viewed as $V$-valued polynomials on $W$.
Namely, for each $I=\{i_1<\ldots<i_p\}\sub [1,d-2]$ we set
$$v_{i,I}:=v_{i,w^0}^{(W_{i_1},\ldots,W_{i_p})}\in W_I^*\ot V=\Hom(W_I,V),$$
where
$$W_I:=W_{i_1}\ot\ldots \ot W_{i_p}.$$
Since $(v_i(w^0))$ form a basis of $S_V$, there exists a unique operator
$$C_I:S_V\to \Hom(W_I,V): v_i(w^0)\mapsto v_{i,I}.$$
We extend $C_I$ in any way to an operator $V\to \Hom(W_I,V)$, which we still denote by $C_I$.
Note that we can also view $C_I$ as a linear map 
$$C_I:V\ot W_I\to V.$$
For an ordered collection of disjoint subsets $I_1,\ldots,I_p\sub [1,d-2]$, we consider the composition
$$C_{I_1}\ldots C_{I_p}:V\ot W_{I_1\sqcup\ldots \sqcup I_p}\xrightarrow{C_{I_p}} V\ot W_{I_1\sqcup\ldots I_{p-1}}\to\ldots 
\to V\ot W_{I_1} \xrightarrow{C_{I_1}} V.$$
We allow the case of an empty collection, i.e., $p=0$, in which case we just get the identity map $V\to V$.

Let us choose a basis $\ell_1,\ldots,\ell_r\in V^*$ in the orthogonal subspace to $S_V$.
For ordered collections $I_1\sqcup\ldots\sqcup I_p\sub [1,d-2]$ and for $j=1,\ldots,r$, we consider the polylinear forms
$$\ell_j\circ C_{I_1}\ldots C_{I_p}\in V^*\ot W_{I_1\sqcup\ldots\sqcup I_p}^*.$$
Note that for an empty collection, i.e., for $p=0$, we just get $\ell_j\in V^*$.

\medskip

\noindent
{\bf Step 5. Differentiating the key identity.}
For each $I=\{i_1<\ldots<i_p\}\sub [1,d-2]$, 
let us consider the embedding
$$\iota(I):W_I \to W_1\otimes\ldots\otimes W_{d-2},$$
which completes $w_{i_1}\ot\ldots\ot w_{i_p}$ by the components $w^0_j$ in the factors $W_j$ with $j\not\in I$.

Let us prove by induction on $p=0,\ldots,d-2$ that for any $I=\{i_1<\ldots<i_p\}\sub [1,d-2]$, one has
$$P|_{S_U\otimes V\otimes \iota(I)(W_I)}\in 
((\ell_j\circ C_{I_1}\ldots C_{I_s} \ |\  I_1\sqcup \ldots \sqcup I_s\subsetneq I,1\le j\le r,s\ge 0), (g_{a,I'} \ |\ 1\le a\le c, I'\sub I, I'\neq \emptyset)),
$$
where on the right we have the tensor ideal generated by the specified elements. Note that all the subsets $I_t$ are
supposed to be nonempty.

The base of induction $p=0$ is clear, since $P(u,v,w^0_1,\ldots,w^0_{d-2})=0$ for any $u\in S_U$ and $v\in V$.
Assume that $p>0$ and the assertion holds for $p-1$. Let us fix a subset
$I_0=\{i_1<\ldots<i_p\}\sub [1,d-2]$.

Now let us equate the $(W_{i_1},\ldots,W_{i_p})$-mixed derivatives at $w^0$ of both sides of the key identity \eqref{key-identity}.
We get the following equality in $U^*\ot W_{I_0}^*$:
\begin{equation}\label{differ-key-identity}
(f_i)^{(W_{i_1},\ldots,W_{i_p})}_{w^0}=
P|_{U\ot v_i(w^0)\ot \iota(I_0)W_{I_0}}+
\sum_{I\sqcup J=I_0, I\neq \emptyset}P|_{U\otimes C_Iv_i(w^0)\otimes \iota(J)W_J}.
\end{equation}
Note that by Lemma \ref{mixed-deriv-ideal-lem}, $(f_i)^{(W_1,\ldots,W_p)}_{w^0}$ belong to
the tensor ideal generated by $g_{a,I'}$ with $1\le a\le c$ and $I'\sub I_0$, $I'\neq \emptyset$.
Note also that the term in the sum in \eqref{differ-key-identity} corresponding to $J=\emptyset$
has zero restriction to $S_U$. Hence, we get
$$P|_{S_U\ot S_V\ot \iota(I_0)W_{I_0}}+\sum_{I\sqcup J=I_0; I,J\neq \emptyset}P|_{(\id_U\otimes C_I)(U\ot S_V)\otimes \iota(J)W_J}\in (g_{a,I'} \ |\ 1\le a\le c, I'\sub I, I'\neq \emptyset).$$
Now the induction assumption implies that $P|_{S_U\ot S_V\ot \iota(I_0)W_{I_0}}$ belongs to the tensor ideal generated by
$g_{a,I'}$ with $I'\sub I_0$, $I'\neq \emptyset$ and by the restrictions of
$\ell_j\circ C_{I_1}\ldots C_{I_s}$ with $s\ge 1$ (where $I_1\sqcup\ldots\sqcup I_s$ is a proper subset of $I_0$). 
By Lemma \ref{elem-lem}, adding $(\ell_j)$ to the generators of the tensor
ideal we get the required assertion about $P|_{S_U\ot V\ot \iota(I_0)W_{I_0}}$. 

\medskip

\noindent
{\bf Step 6. Conclusion of the proof for a single tensor}. 
Now using the result of the previous step for $p=d-2$, we get
$$\rk^S P|_{S_U\ot V\ot W_1\ot\ldots \ot W_{d-2}}\le r(1+\th_{d-2})+c(2^{d-2}-1),$$
where $\th_n$ is the number of ordered collections of disjoint nonempty
subsets $I_1\sqcup\ldots\sqcup I_p\subsetneq [1,n]$ (with $p\ge 1$).
By Lemma \ref{elem-lem}, this implies that
$$\rk^S P \le r+r(1+\th_{d-2})+c(2^{d-2}-1).$$
Now we recall that
$$r+c=g'(P)$$
(see \eqref{dim-count-eq}). Hence, we get
$$\rk^S P \le (r+c)\cdot\max(2+\th_{d-2},2^{d-2}-1)=g'(P)\cdot C_d$$
as claimed.

\medskip

\noindent
{\bf Step 7. The case of several tensors}.
Now assume that $\k$ is algebraically closed.
Suppose we are given a collection $\ov{P}=(P_1,\ldots,P_s)$ of polylinear forms on $V_1\times\ldots\times V_d$.
For a nonzero collection of coefficients $\ov{c}=(c_1,\ldots,c_s)$ in $\k$, we set 
$$P_{\ov{c}}=c_1P_1+\ldots+c_sP_s.$$
The key observation is that
\begin{equation}\label{Z-union-eq}
Z_{\ov{P}}=\bigcup_{\ov{c}\neq 0} Z_{P_{\ov{c}}}.
\end{equation}
where we can consider $\ov{c}$ as points in the projective space $\P^{s-1}$.
As we have already proved, for each $\ov{c}$,
$$\codim_{V_2\times\ldots\times V_d} Z_{P_{\ov{c}}}\ge C_d^{-1}\cdot \rk^S(P_{\ov{c}})\ge C_d^{-1}\cdot \rk^S(\ov{P}).$$
After taking the union over $\ov{c}$ in $\P^{s-1}$, we get
$$\codim_{V_2\times\ldots\times V_d} Z_{\ov{P}}\ge C_d^{-1}\cdot \rk^S(\ov{P})-s+1,$$
as claimed.

\section{Symmetric case}

\subsection{More on higher derivatives}

Let $f\in \k[V]_d$. 
Thinking of the $n$th derivative of $f\in \k[V]$ (where $n\le d$) as a degree $d-n$ polynomial map 
$$V\to \k[V]_n: v_0\mapsto f^{(n)}_{v_0}$$ 
we can write it as a tensor
$$f^{(n,d-n)}\in \k[V]_n\ot \k[V]_{d-n}.$$
By definition, 
$$f(v_1+v_2)=\sum_{n=0}^d f^{(n,d-n)}(v_1,v_2),$$
so $f^{(n,d-n)}$ is just the component of $f(v_1+v_2)$ of bidegree $(n,d-n)$ in $(v_1,v_2)$.

Similarly, we define an operation for $n_1+\ldots+n_p=d$,
$$\k[V]_d\to \k[V]_{n_1}\ot \ldots \ot \k[V]_{n_p}: f\mapsto f^{(n_1,\ldots,n_p)},$$
by letting $f^{(n_1,\ldots,n_p)}$ to be the component of multidegree $(n_1,\ldots,n_p)$ in
$f(v_1+\ldots+v_p)$.
For example,
$$f^{(1,1,d-2)}\in V^*\ot V^*\ot \k[V]_{d-2}$$
is exactly $H_f$, the Hessian symmetric form on $V$ (depending polynomially on $x\in V$).

We will use two properties of this construction, which are easy to check:
\begin{itemize}
\item $f^{(n_1,\ldots,n_p)}(x,\ldots,x)=\frac{d!}{n_1!\ldots n_p!}\cdot f(x)$;
\item for $m\le n_i$ the $m$th derivative with respect to $x_i$
of $f^{(n_1,\ldots,n_p)}(x_1,\ldots,x_p)$ at $(x^0_1,\ldots,x^0_p)$ is
equal to 
$$f^{(n_1,\ldots,n_{i-1},m,n_i-m,\ldots,n_p)}(x^0_1,\ldots,x^0_{i-1},v,x^0_i,\ldots,x^0_p).$$
\end{itemize}

\subsection{Proof of Theorem \ref{sym-main-thm}} \hfill

It will be convenient to denote one copy of $V$ as $X$ in the product $V\times V=V\times X$. 
In addition, we view $H_f=f^{(1,1,d-2)}$ as a bilinear form on $U\times V$ where $U=V$,
so that
$Z^{\sym}$ consists of pairs $(v,x)\in V\times X$ such that $f^{(1,1,d-2)}(u,v,x)=0$ for all $u\in U$.

\medskip

\noindent
{\bf Step 1. Dimension count and choosing a general $\k$-point}. 
Let $Z$ be an irreducible component of the Zariski closure of the set of $\k$-points $Z^{\sym}_f(\k)$,
such that $\codim_{V\times X}Z=g'_{\sym}(f)$, and let $Z_X\sub X$ denote the closure of the image of $Z$ under the
projection $p_2:V\times X\to X$. 
As before, we choose a nonempty smooth open subset $\UU\sub Z_X$ over which $H_f$
has maximal rank $r$, so that $p_2^{-1}(\UU)\cap Z$ is a vector bundle of rank $\dim V-r$ over $\UU$, 
in particular,
$$\codim_X Z_X+r=g'_{\sym}(f).$$ 

We choose a $\k$-point $x^0$ in $\UU\sub Z_X$ and set
$$S:=\ker(H_f(x^0))\sub V.$$

\medskip

\noindent
{\bf Step 2. The first set of key polynomials}. Set 
$$c:=\codim_X Z_X.$$
Since $x^0$ is a smooth point of $Z_X$, we can choose $c$ elements $g_1,\ldots,g_c$ in the ideal $I_{Z_X}\sub \k[X]$
with linearly independent derivatives at $x^0$. 
Thus, for each $a=1,\ldots,c$, and for $1\le i\le d-2$, we 
consider the derivatives
$$(g_a)^{(i)}_{x^0}\in \k[X]_i.$$

\medskip

\noindent
{\bf Step 3. Setting up key identity.} 
Let us set $k=\dim V-r$.
Applying Lemma \ref{det-lem}(ii) to the morphism of trivial vector
bundles $V\ot \OO\to V^*\ot \OO$ given by $H_f=f^{(1,1,d-2)}$
over $Z_X$, we find global sections $v_1(x),\ldots,v_k(x)\in V\ot \k[Z_X]$, such that
$v_1(x^0),\ldots,v_k(x^0)$ form a basis of $S$, and
$$f^{(1,1,d-2)}(u,v_i(x),x)=0 \ \text{ for any } u\in U \ \text{ and } x\in Z_X, \ i=1,\ldots,k.$$ 
We lift $v_i(x)$ to polynomials in $V\ot \k[X]$, which we denote in the same way.
Now we define a collection of $U^*$-valued polynomials on $X$,
\begin{equation}\label{sym-key-identity}
f_i(x):=f^{(1,1,d-2)}(u,v_i(x),x)\in U^*\ot \k[X].
\end{equation}
By construction, all $f_i(x)$ belong to $U^*\ot I_{Z_X}\sub U^*\ot \k[X]$.

\medskip

\noindent
{\bf Step 4. The second set of key forms}.
For each $1\le m\le d-2$,
we consider higher derivatives of $v_i$ at $x^0$, viewed as $V$-valued polynomials on $X$.
$$(v_i)^{(m)}_{x^0}\in V\ot \k[X]_m.$$
Since $(v_i(x^0))$ form a basis of $S$, there exists a linear operator
$$C_m:S\to V\ot \k[X]_m: v_i(x^0)\mapsto (v_i)^{(m)}.$$
We extend $C_m$ in any way to an operator $V\to V\ot \k[X]_m$, which we still denote by $C_m$.
For $m_1+\ldots+m_p\le d-2$, we consider the composition
$$C_{m_1}\ldots C_{m_p}:V\xrightarrow{C_{m_p}} V\ot \k[X]_{m_p}\to \ldots \to
V\ot \k[X]_{m_2+\ldots+m_p} \xrightarrow{C_{m_1}} V\ot \k[X]_{m_1+\ldots+m_p}.$$
We allow the case of an empty collection, i.e., $p=0$, in which case we just get the identity map $V\to V$.

Finally, we denote by $\ell_1,\ldots,\ell_r\in V^*$ a basis in the orthogonal subspace to $S$.
For $m_1+\ldots+m_p\le d-2$ and for $j=1,\ldots,r$, we consider the elements
$$\ell_j\circ C_{m_1}\ldots C_{m_p}\in V^*\ot \k[X]_{m_1+\ldots+m_p}.$$
Note that for an empty collection, i.e., for $p=0$, we just get $\ell_j\in V^*$.

\medskip

\noindent
{\bf Step 5. Differentiating the key identity.}
Let us prove by induction on $p=0,\ldots,d-2$ that one has
\begin{align*}
&f^{(1,1,p,d-2-p)}(u,v,x,x^0)|_{S\times V\times X}\in \\
&(((\ell_j\circ C_{m_1}\ldots C_{m_s})(v,x) \ |\ m_1+\ldots+m_s<p, 1\le j\le r),
((g_a)^{(m)}_{x^0}(x) \ |\ 1\le a\le c, 1\le m\le p),
\end{align*}
where on the right we have the ideal generated by the specified elements. 

The base of induction $p=0$ is clear, since $f^{(1,1,d-2)}(u,v,x^0)=0$ for any $u\in S$ and $v\in V$.
Assume that $p>0$ and the assertion holds for $p-1$.
Now let us equate the $p$th derivatives at $x=x^0$ of both sides of \eqref{sym-key-identity}.
We get the following equality in $U^*\ot \k[X]_p$:
$$(f_i)^{(p)}_{x^0}(x)=f^{(1,1,p,d-2-p)}(u,v_i(x^0),x,x^0)+\sum_{q=1}^p f^{(1,1,p-q,d-2-p+q)}(u,C_q(v_i(x^0),x),x,x^0).$$
The left-hand side belongs to the ideal generated by $(g_a)^{(m)}_{x^0}(x)$ with $1\le a\le c$ and $1\le m\le p$.
Note also that the term corresponding to $q=p$ in the right-hand side
has zero restriction to $u\in S$. Hence, we get
\begin{align*}
&f^{(1,1,p,d-2-p)}(u,v,x,x^0)|_{S\times S\times X}+\sum_{q=1}^{p-1} f^{(1,1,p-q,d-2-p+q)}(u,C_q(v,x),x,x^0)|_{S\times S\times X}\in 
\\ 
&((g_a)^{(m)}_{x^0}(x) \ |\ 1\le a\le c, 1\le m\le p).
\end{align*}
Now the induction assumption implies that 
$f^{(1,1,p,d-2-p)}(u,v,x,x^0)|_{S\times S\times X}$ belongs to the ideal generated by $(g_a)^{(m)}_{x^0}(x)$ for 
$1\le a\le c$, $1\le m\le p$ and by the restrictions to $S\times X$ of
$(\ell_j\circ C_{m_1}\ldots C_{m_s})(v,x)$ with $s\ge 1$, $m_1+\ldots+m_s<p$, $1\le j\le r$.
By Lemma \ref{elem-lem}, adding $(\ell_j)$ to the generators of the 
ideal we get the required assertion about $f^{(1,1,p,d-2-p)}(u,v,x,x^0)|_{S\times V\times X}$. 

\medskip

\noindent
{\bf Step 6. Conclusion of the proof for a single polynomial}. 
Now using the result of the previous step for $p=d-2$, we get
\begin{align*}
&f^{(1,1,d-2)}(u,v,x)|_{S\times V\times X}\in \\
&(((\ell_j\circ C_{m_1}\ldots C_{m_s})(v,x) \ |\ m_1+\ldots+m_s<d-2, 1\le j\le r),
((g_a)^{(m)}_{x^0}(x) \ |\ 1\le a\le c, 1\le m\le d-2)).
\end{align*}
Hence,
\begin{align}\label{arb-char-proof-eq}
&f^{(1,1,d-2)}(u,v,x)\in ((\ell_j(u) \ |\ 1\le j\le r), \\
&((\ell_j\circ C_{m_1}\ldots C_{m_s})(v,x) \ |\ m_1+\ldots+m_s<d-2, 1\le j\le r),
((g_a)^{(m)}_{x^0}(x) \ |\ 1\le a\le c, 1\le m\le d-2)).\nonumber
\end{align}
Now plugging $u=v=x$, we get
\begin{align*}
&d(d-1)\cdot f(x)\in \\
&((F_{j;m_1,\ldots,m_s}(x) \ |\ m_1+\ldots+m_s<d-2, 1\le j\le r),
((g_a)^{(m)}_{x^0}(x) \ |\ 1\le a\le c, 1\le m\le d-2)),
\end{align*}
where $F_{j;m_1,\ldots,m_s}(x)=(\ell_j\circ C_{m_1}\ldots C_{m_s})(x,x)$ has degree $1+m_1+\ldots+m_s<d-1$.
It follows that
$$\rk^S(f)\le r(1+\th^{\sym}_{d-2})+c(d-2),$$
where $\th^{\sym}_n$ is the number of $(m_1,\ldots,m_s)$, with $s\ge 1$, $m_i\ge 1$,
$m_1+\ldots+m_s<n$. It is easy to see that $\th^{\sym}_n=2^{n-1}-1$.
Since $r+c=g'_{\sym}(f)$, we get
$$\rk^S P \le (r+c)\cdot\max(2^{d-3},d-2)=g'_{\sym}(f)\cdot 2^{d-3}$$
as claimed.

\medskip

\noindent
{\bf Step 7. The case of several polynomials}.
Now assume that $\k$ is algebraically closed,
and we are given a collection $\ov{f}=(f_1,\ldots,f_s)$ of homogeneous polynomias on $V$ of degree $d$.
For a nonzero collection of coefficients $\ov{c}=(c_1,\ldots,c_s)$ in $\k$, we set 
$$f_{\ov{c}}=c_1f_1+\ldots+c_sf_s.$$
As in the non-symmetric case, the key observation is that
\begin{equation}\label{Zsym-union-eq}
Z^{\sym}_{\ov{f}}=\bigcup_{\ov{c}\neq 0} Z^{\sym}_{f_{\ov{c}}}.
\end{equation}
where we can consider $\ov{c}$ as points in the projective space $\P^{s-1}$.
Using the case of a single polynomial, we deduce that
$$\codim_{V\times V} Z^{\sym}_{\ov{f}}\ge 2^{-d+3}\cdot \rk^S(\ov{f})-s+1,$$
as claimed.

\subsection{Relation to singularities}\label{sing-sec}

Now we will relate $g_{\sym}(f)$ to $c(f)$, the codimension in $V$ of the singular locus of the hypersurface $f=0$.

\begin{prop}\label{cf-ineq-prop}
(i) The subvariety $Z^{\sym}_f\sub V\times X=V\times V$ contains the singular locus of $f^{(2,d-2)}(v,x)=0$.

\noindent
(ii) One has 
$$g_{\sym}(f)\le (d+1)\cdot c(f)$$ 
(resp., $g_{\sym}(f)\le d\cdot c(f)$ if $d$ is odd and $\mathrm{char}(\k)\neq 2$).

\noindent
(iii) If $\mathrm{char}(\k)$ does not divide $d-1$ then 
$$c(f)\le g_{\sym}(f).$$
\end{prop}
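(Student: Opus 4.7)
For (i), I would compute directly that $f^{(2,d-2)}(v,x)=\tfrac12 H_f(x)(v,v)$ using the two properties of the $(n_1,\ldots,n_p)$-construction recorded in the previous subsection. Differentiating in the $v$-variable in a direction $u$ yields $H_f(x)(u,v)$, so any singular point $(v,x)$ of the hypersurface $\{f^{(2,d-2)}=0\}$ must satisfy $H_f(x)(u,v)=0$ for all $u\in V$, which is precisely $v\in\ker H_f(x)$, i.e.\ $(v,x)\in Z^{\sym}_f$.

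For (ii), I plan to exhibit an explicit large subvariety of $Z^{\sym}_f$ out of the singular locus of $\{f=0\}$. Let $X:=\Sing(f=0)_{\mathrm{red}}$, pick an irreducible component of maximal dimension $\dim V - c(f)$, and let $X^{\mathrm{sm}}$ be its smooth locus. Since $\nabla f\equiv 0$ on $X$, differentiating along a smooth curve in $X$ through a point $x\in X^{\mathrm{sm}}$ gives $T_x X\subset\ker H_f(x)$. Viewing each $T_xX$ as a subspace of $V$, the relative tangent variety $\{(v,x)\in V\times V : x\in X^{\mathrm{sm}},\,v\in T_xX\}$ has dimension $2(\dim V - c(f))$ and is contained in $Z^{\sym}_f$; taking the closure and comparing codimensions yields $g_{\sym}(f)\le 2c(f)$. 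Since $2\le d+1$ for $d\ge 1$ and $2\le d$ for $d\ge 2$, this implies both bounds claimed in (ii) (in fact with room to spare, so I expect the authors may instead route through part (i) and bound $\codim\Sing(f^{(2,d-2)}=0)$ directly in order to see the specific constants $d+1$ and $d$ arise naturally).

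For (iii), I would intersect $Z^{\sym}_f$ with the diagonal $\Delta\colon V\hra V\times V$. Euler's relation $H_f(x)\cdot x=(d-1)\nabla f(x)$ together with the hypothesis $\mathrm{char}(\k)\nmid d-1$ identifies $\Delta^{-1}(Z^{\sym}_f)$ set-theoretically with $\{\nabla f=0\}$, which contains $\Sing(f=0)$ (and coincides with it once Euler's identity $d\cdot f=\sum x_i\partial_if$ forces $\{\nabla f=0\}\subset\{f=0\}$). The standard intersection-codimension inequality $\codim_{V\times V}(Z^{\sym}_f\cap\Delta(V))\le g_{\sym}(f)+\dim V$ then immediately translates, after identifying $\Delta(V)\cong V$, into $c(f)\le g_{\sym}(f)$. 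The main delicate point is matching the reduced singular locus with $\{\nabla f=0\}$ under only the stated hypothesis: if $\mathrm{char}(\k)$ divides $d$ the two loci can genuinely differ, which is presumably why the principal theorems in the introduction impose the stronger assumption $\mathrm{char}(\k)\nmid d(d-1)$.
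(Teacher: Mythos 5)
Your parts (i) and (iii) follow the paper's own route. In (i) the identity $f^{(2,d-2)}(v,x)=\tfrac12 H_f(x)(v,v)$ is an unnecessary detour (and fails as written in characteristic $2$), but the step you actually use --- that the $v$-derivative of $f^{(2,d-2)}(v,x)$ in direction $u$ is $f^{(1,1,d-2)}(u,v,x)$ --- is exactly the paper's characteristic-free argument. In (iii) you reproduce the paper's diagonal argument, and the subtlety you flag is genuine: $Z^{\sym}_f\cap\Delta(V)=\{\nabla f=0\}$ contains $\Sing(f=0)$ but need not coincide with it when $\mathrm{char}(\k)\mid d$, and the containment goes the wrong way for deducing $c(f)\le g_{\sym}(f)$; the paper's own proof silently identifies the two loci, which is legitimate only when $\mathrm{char}(\k)\nmid d$ (as in the main theorems). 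One point worth making explicit: the dimension inequality for $Z^{\sym}_f\cap\Delta(V)$ requires the intersection of each top-dimensional component with the diagonal to be nonempty, which holds because $Z^{\sym}_f$ is bihomogeneous, so every component is a cone through the origin.

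Part (ii) is where you genuinely diverge, and your argument is correct and in fact sharper than the paper's. The key point, $T_xX\sub\ker H_f(x)$ for $x$ a smooth point of a top-dimensional component $X$ of $\Sing(f=0)_{\mathrm{red}}$, should be phrased algebraically rather than via curves: each $\partial_i f$ lies in the radical ideal $I(X)$, hence $d(\partial_i f)|_x$ annihilates $T_xX$, and $d(\partial_i f)|_x(v)=\sum_j\partial_j\partial_i f(x)v_j=H_f(x)(e_i,v)$, so $v\in T_xX$ forces $v\in\ker H_f(x)$ in any characteristic (after reducing to $\k$ algebraically closed, as the paper also does, so that $X^{\mathrm{sm}}$ is dense). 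The closure of $\{(v,x): x\in X^{\mathrm{sm}},\ v\in T_xX\}$ then sits inside $Z^{\sym}_f$ and has dimension $2\dim X$, giving $g_{\sym}(f)\le 2c(f)$ with no hypothesis on $d$ or on the characteristic. The paper instead routes through part (i): it bounds $c(f^{(2,d-2)})$ by writing $f^{(2,d-2)}$ as a linear combination of $d+1$ (resp.\ $d$) polynomials $f(\la_i v+x)$ via Vandermonde interpolation, using subadditivity of $c$ under sums and its invariance under surjective linear substitutions; this is what produces the constants $d+1$ and $d$. Your constant $2$ strictly improves these for all $d\ge 2$, and would propagate to improvements in Proposition \ref{cf-ineq-bis-prop}(ii) and in the upper bounds of Theorems \ref{cf-thm} and \ref{derivative-thm}; it is also consistent with (and refines) the mechanism of Lemma \ref{trivial-ineq-lem}(ii). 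So the only adjustments needed are the algebraic phrasing of the tangent-space inclusion in (ii) and the characteristic caveat you already identified in (iii).
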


\begin{proof}
(i) The first derivative of $f^{(2,d-2)}(v,x)$ along $v$ at $(v^0,x^0)$ is
$f^{(1,1,d-2)}(v,v^0,x^0)$, so if $(v^0,x^0)$ is a singular point of $f^{(2,d-2)}(v,x)=0$ then
$f^{(1,1,d-2)}(v,v^0,x^0)=0$ for all $v$, i.e., $(v^0,x^0)\in Z^{\sym}_f$.

\noindent
(ii) Since we are comparing dimensions of algebraic varieties, without loss of generality, we can assume
that $\k$ is algebraically closed. 

By part (i), we have $g_{\sym}(f)\le c(F)$, where $F=f^{(2,g-2)}$.
It is easy to see that if $F(x)=F_1(x)+\ldots+F_r(x)$ then $c(F)\le c(F_1)+\ldots+c(F_r)$.
Also, if $A:V\to W$ is a linear surjective map and $g\in \k[W]$ then $c(g\circ A)=c(g)$.

Thus, it remains to check that $f^{(2,d-2)}(v,x)$ is a linear combination of $d+1$ (resp., $d$, if $d$ is odd
 and $\mathrm{char}(\k)\neq 2$)
polynomials of the form $f(A_i(v,x))$, for some linear surjective maps $A_i:V\times V\to V$.

Let us view $f(v+x)$ as a nonhomogeneous function of $v$, $g(v)=g_0+g_1+\ldots+g_d$ of degree $\le d$ 
(with coefficients in $\k[V]$). Now picking any $d+1$ distinct elements $\la_0,\ldots,\la_d\in\k$, we
can express $g_0,\ldots,g_d$ as linear combinations of $g(\la_0v),\ldots,g(\la_dv)$
(since the corresponding linear change is given by the Vandermonde matrix).  

In the case when $d$ is odd and $\mathrm{char}(\k)\neq 2$,
we can similarly express the components of even degree, $(g_{2i})_{i\le (d-1)/2}$
as linear combinations of $g_0=g(0)$ and $(g(\la_i v)+g(-\la_i v))/2$, for $1\le i\le (d-1)/2$,
where $(\la_i)$ are nonzero constants such that $(\la_i^2)$ are all distinct. 

It remains to observe that $g_2=f^{(2,d-2)}$ and that each $g(\la v)=f(\la v+x)$ is of the required type.

\noindent
(iii) This follows from the relation 
$$(d-1)\cdot f^{(1,d-1)}(v,x)=f^{(1,1,d-2)}(v,x,x).$$
Indeed, this implies that the intersection of $Z^{\sym}_f$ with the diagonal $V\sub V\times V$ is exactly the
singular locus of $f=0$, which gives the claimed inequality.
\end{proof}

Now let us consider the case of a collection $\ov{f}=(f_1,\ldots,f_s)$ of homogeneous polynomials on $V$ of degree $d$.
We consider the corresponding family of hypersurfaces in $V$, $f_{\ov{c}}=0$ parametrized by the projective space $\P^{s-1}$.
It is clear that for the locus $S(\ov{f})\sub V$ where the rank of Jacobi matrix of $(f_1,\ldots,f_s)$ is $<s$, we have
$$S(\ov{f})=\bigcup_{\ov{c}\neq 0} \Sing(f_{\ov{c}}=0).$$

\begin{prop}\label{cf-ineq-bis-prop}
(i) One has the inclusion
$$\bigcup_{\ov{c}\neq 0} \Sing(f_{\ov{c}}^{(2,d-2)}=0)\sub Z^{\sym}_{\ov{f}}.$$

\noindent
(ii) One has
$$g_{\sym}(\ov{f})\le (d+1)\cdot c'(\ov{f})+d(s-1)$$
(resp., $g_{\sym}(\ov{f})\le d\cdot c'(\ov{f})+(d-1)(s-1)$ if $d$ is odd and $\mathrm{char}(\k)\neq 2$).

\noindent
(iii) Assume that $(f_1,\ldots,f_s)$ define a complete intersection
$V(\ov{f})\sub V$, i.e., $\codim_V V(\ov{f})=s$.
Then 
$$c'(\ov{f})\le c(\ov{f})\le c'(\ov{f})+s.$$
Assume in addition that $\mathrm{char}(\k)$ does not divide $d-1$. Then
$$c'(\ov{f})\le g_{\sym}(\ov{f}).$$
\end{prop}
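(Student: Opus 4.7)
The plan follows the strategy of Proposition \ref{cf-ineq-prop}, promoted to the family $\{f_{\ov{c}}\}_{\ov{c}\in\P^{s-1}}$ via \eqref{Zsym-union-eq}.

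For part (i), the computation is direct: if $(v^0,x^0)\in\Sing(f_{\ov{c}}^{(2,d-2)}=0)$, the vanishing of the $v$-gradient gives $f_{\ov{c}}^{(1,1,d-2)}(\cdot,v^0,x^0)=H_{f_{\ov{c}}}(x^0)(\cdot,v^0)=0$ in $V^*$, so $v^0\in\ker H_{f_{\ov{c}}}(x^0)$ and hence $(v^0,x^0)\in Z^{\sym}_{f_{\ov{c}}}\sub Z^{\sym}_{\ov{f}}$ by \eqref{Zsym-union-eq}.

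Part (ii) is the heart of the matter. Let $S(\ov{f})^o\sub S(\ov{f})$ be the open locus where the Jacobian $(\nabla f_1,\ldots,\nabla f_s)$ has maximal rank $s-1$, and let $\rho:S(\ov{f})^o\to\P^{s-1}$ send $x$ to the unique projective kernel $[\ov{c}]$ with $\sum_ic_i\nabla f_i(x)=0$. Let $\Pi=\ov{\im\rho}$ and set $d_\Pi=\dim\Pi\le s-1$. Since $\dim S(\ov{f})^o=\dim V-c'(\ov{f})$, Chevalley's theorem gives $\dim\rho^{-1}(\ov{c})=\dim V-c'(\ov{f})-d_\Pi$ for generic $\ov{c}\in\Pi$; as $\rho^{-1}(\ov{c})\sub\Sing(f_{\ov{c}}=0)$, this yields $c(f_{\ov{c}})\le c'(\ov{f})+d_\Pi$, and Proposition \ref{cf-ineq-prop}(ii) then gives $\dim Z^{\sym}_{f_{\ov{c}}}\ge 2\dim V-(d+1)(c'(\ov{f})+d_\Pi)$ for such $\ov{c}$. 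Form the incidence family
\[
\wt Z_\Pi=\{(v,x,\ov{c})\in V\times V\times\Pi : (v,x)\in Z^{\sym}_{f_{\ov{c}}}\}.
\]
It surjects onto $\Pi$ (since $\{0\}\times V\sub Z^{\sym}_{f_{\ov{c}}}$ for every $\ov{c}$), and Chevalley applied to $\wt Z_\Pi\to\Pi$ gives
\[
\dim\wt Z_\Pi\ge d_\Pi+2\dim V-(d+1)(c'(\ov{f})+d_\Pi)=2\dim V-(d+1)c'(\ov{f})-d\cdot d_\Pi.
\]
The projection $\wt Z_\Pi\to V\times V$ lands in $Z^{\sym}_{\ov{f}}$ and is generically finite: for generic $(v,x)\in Z^{\sym}_{\ov{f}}$, the map $\k^s\to V^*$, $\ov{c}\mapsto\sum_ic_iH_{f_i}(x)v$, has rank exactly $s-1$, so its projective kernel is a single point in $\P^{s-1}$. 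Hence $\dim Z^{\sym}_{\ov{f}}\ge\dim\wt Z_\Pi$, yielding $g_{\sym}(\ov{f})\le(d+1)c'(\ov{f})+d\cdot d_\Pi\le(d+1)c'(\ov{f})+d(s-1)$. The odd-$d$ case substitutes $(d+1,d)$ by $(d,d-1)$ using the sharper bound in Proposition \ref{cf-ineq-prop}(ii).

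For part (iii), the Jacobi criterion applied to the complete intersection $V(\ov{f})$ identifies $\Sing(V(\ov{f}))$ with $V(\ov{f})\cap S(\ov{f})$: the inclusion into $S(\ov{f})$ gives $c'(\ov{f})\le c(\ov{f})$, and the codimension-of-intersection bound $\codim(V(\ov{f})\cap S(\ov{f}))\le\codim V(\ov{f})+\codim S(\ov{f})=s+c'(\ov{f})$ in the smooth ambient $V$ yields $c(\ov{f})\le c'(\ov{f})+s$. For the inequality $c'(\ov{f})\le g_{\sym}(\ov{f})$, intersect $Z^{\sym}_{\ov{f}}$ with the diagonal $\Delta_V\sub V\times V$: Euler's identity $H_{f_i}(v)v=(d-1)\nabla f_i(v)$ (valid since $\mathrm{char}(\k)\nmid d-1$) shows $(v,v)\in Z^{\sym}_{\ov{f}}$ iff $\nabla f_1(v),\ldots,\nabla f_s(v)$ are linearly dependent in $V^*$, i.e., $v\in S(\ov{f})$; then the intersection dimension inequality $\codim(Z^{\sym}_{\ov{f}}\cap\Delta_V)\le g_{\sym}(\ov{f})+\codim\Delta_V$ in $V\times V$ translates to $c'(\ov{f})\le g_{\sym}(\ov{f})$. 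The delicate step is the generic finiteness of $\wt Z_\Pi\to V\times V$ in part (ii); without it, the bound degrades to $(d+1)c'(\ov{f})+(d+1)d_\Pi$ (exactly what bounding $g_{\sym}(\ov{f})$ by $g_{\sym}(f_{\ov{c}^*})$ for a single $\ov{c}^*$ already produces), losing the saving of $d_\Pi$ that only the family argument provides.
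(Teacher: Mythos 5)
Your overall strategy is the paper's: parts (i) and (iii) are argued exactly as in the text (part (i) is Proposition \ref{cf-ineq-prop}(i) combined with \eqref{Zsym-union-eq}; part (iii) uses the Jacobi criterion, the affine intersection--dimension inequality, and the identification of $Z^{\sym}_{\ov{f}}\cap \Delta_V$ with $S(\ov{f})$ via the Euler-type relation $f^{(1,1,d-2)}(\cdot,v,v)=(d-1)f^{(1,d-1)}(\cdot,v)$), and part (ii) follows the same two-step plan: produce a subvariety of $\P^{s-1}$ of dimension $a\le s-1$ along which $c(f_{\ov{c}})\le c'(\ov{f})+a$, apply Proposition \ref{cf-ineq-prop}(ii) fiberwise, and recover the summand $a$ by a dimension count on an incidence variety, giving $(d+1)c'(\ov{f})+d\cdot a$. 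Two steps of your part (ii), however, are not justified as written.

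First, your construction of $\Pi$ rests on the claim $\dim S(\ov{f})^o=\dim V-c'(\ov{f})$, i.e., that the locus where the Jacobian has rank exactly $s-1$ is dense in a top-dimensional component of $S(\ov{f})$. This can fail: the rank may drop below $s-1$ along an entire top-dimensional component, in which case $\rho$ is undefined there and your bound $c(f_{\ov{c}})\le c'(\ov{f})+d_\Pi$ for generic $\ov{c}\in\Pi$ does not follow. The paper avoids this by taking the full incidence correspondence $I=\{(x,\ov{c}):x\in\Sing(f_{\ov{c}}=0)\}$, choosing a component of $I$ dominating a top-dimensional component $T$ of $S(\ov{f})$, and letting $X$ be the closure of its image in $\P^{s-1}$; then $a=\dim X\le s-1$ and the generic fiber over $X$ has dimension $\ge\dim T-a$, with no hypothesis on the rank. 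Second, your justification of generic finiteness of $\wt Z_\Pi\to V\times V$ --- that for generic $(v,x)\in Z^{\sym}_{\ov{f}}$ the map $\ov{c}\mapsto H_{f_{\ov{c}}}(x)(\cdot,v)$ has rank exactly $s-1$ --- is neither proved nor automatic (the rank can be $\le s-2$ on a whole top-dimensional component of $Z^{\sym}_{\ov{f}}$), and in any case what is needed is finiteness of the fibers over the \emph{image} of $\wt Z_\Pi$, which may sit inside a non-generic part of $Z^{\sym}_{\ov{f}}$; the fiber over $(v,x)$ is $\Pi\cap L_{(v,x)}$ for a linear subspace $L_{(v,x)}\sub\P^{s-1}$, so what must be excluded is $\Pi\sub L_{(v,x)}$ on a large set. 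To be fair, you correctly isolate this as the delicate point, and the paper itself passes from the bound on each $\codim Z^{\sym}_{f_{\ov{c}}}$, $\ov{c}\in X$, to $\codim Z^{\sym}_{\ov{f}}\le(d+1)(c'(\ov{f})+a)-a$ citing only \eqref{Zsym-union-eq}; but the argument you offer does not close the gap.
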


\begin{proof}
(i) This follows from Proposition \ref{cf-ineq-prop}(i) due to \eqref{Zsym-union-eq}.

\noindent
(ii) Since $S(\ov{f})$ has codimension $c'(\ov{f})$ in $V$,
it follows that for some $a\le s-1$, there exists an $a$-dimensional subvariety $X\sub \P^{s-1}$
such that 
$$c(f_{\ov{c}})=\codim_V \Sing (f_{\ov{c}})\le c'(\ov{f})+a \ \text{ for } \ov{c}\in X.$$
Applying Proposition \ref{cf-ineq-prop}(ii) we see that for each $\ov{c}\in X$, one has
$$\codim_{V\times V} Z^{\sym}_{f_{\ov{c}}}\le (d+1)\cdot (c'(\ov{f})+a)$$
(resp., $\le d\cdot (c'(\ov{f})+a)$ if $d$ is odd).
Hence, using \eqref{Zsym-union-eq} we get
$$\codim_{V\times V} Z^{\sym}_{\ov{f}}\le (d+1)\cdot (c'(\ov{f})+a)-a$$
(resp., $\le d\cdot (c'(\ov{f})+a)-a$ if $d$ is odd). Since $a\le s-1$, this implies the assertion.

\noindent
(iii) If $(f_1,\ldots,f_s)$ define a complete intersection then by the Jacobi criterion of smoothness, we have
$$\Sing V(\ov{f})=S(\ov{f})\cap V(\ov{f}).$$
In particular, we have an inclusion $\Sing V(\ov{f})\sub S(\ov{f})$, so 
$$c'(\ov{f})=\codim_V S(\ov{f})\le c(\ov{f}).$$
Also, we have
$$c(\ov{f})-s=\codim_{V(\ov{f})} \Sing V(\ov{f})\le \codim_V S(\ov{f})=c'(\ov{f}).$$

If we assume in addition that $\mathrm{char}(\k)$ does not divide $d-1$ then 
the intersection of $Z^{\sym}_{\ov{f}}$ with the diagonal $V\sub V\times V$ is exactly $S(\ov{f})$.
Hence, we get 
$$c'(\ov{f})=\codim_V S(\ov{f})\le g_{\sym}(\ov{f}).$$
\end{proof}

\medskip
\begin{proof}[Proof of Theorem \ref{cf-thm}]
(i) If $f(x)=\sum_{i=1}^r h_i(x)g_i(x)$ then the locus $h_i(x)=g_i(x)=0$, for $i=1,\ldots,r$, is contained in the singular locus of
$f(x)=0$, so $c(f)\le 2r$.

 Now for the other inequality, let $c =c_\k (f) $ and $X$ an irreducible component of codimension $c$ of the Zariski closure of the $\k$-points of $\Sing(f=0).$ Let $v_0\in X$ be a smooth $\k$-point and $g_1,\ldots,g_c\in I(X)$ defined over $\k$ with linearly independent differentials at $v_0.$ For all $k\in [n], \partial_k f \in I(X),$ so lemma \ref{deriv-ideal-lem} yields
    \[
        \partial_k f = (\partial_k f)^{(d-1)}_{v_0} \in \left((g_i)^{(j)}_{v_0}\right)_{i\in [c],j\in [d-1]}.
    \]
    By Euler's formula,
    \[
        f = \frac{1}{d}\sum_{k=1}^n x_k \partial_k f\in \left((g_i)^{(j)}_{v_0}\right)_{i\in [c],j\in [d-1]}.
    \] 
    This gives $\rk^S(f)\le (d-1)\cdot c.$

\noindent
(ii) We deduce this from the result for a single form as in the proof of theorem \ref{sym-main-thm}.
%
%
\end{proof}

\medskip
\begin{proof}[Proof of Corollary \ref{AH-cor}]
In the notation of \cite[Theorem A]{AH} (recalling that the strength of $f$ is $\rk^S(f)-1$), the inequality of Theorem \ref{cf-thm}(i) implies that one can take 
$$\prescript{m}{}A(d)=(d-1)(m+2)-1.$$
It is also well known that for $d=2$, one can take 
$$\prescript{m}{}A(2)=\lceil \frac{m+1}{2}\rceil$$
(see e.g., \cite[Prop.\ 4.10]{AH2}).
Now the assertion follows from \cite[Theorem A(c)]{AH}.
\end{proof}

\begin{remark}\label{char-free-rem}
Note that for $\k$ algebraically closed of {\it arbitrary} characteristic, Eq.\,\eqref{arb-char-proof-eq} shows that 
$$\rk^S f^{(1,1,d-2)}(u,v,x) \le (2^{d-3}+1)\cdot g_{\sym}(f)\le (2^{d-3}+1)\cdot (d+1)\cdot c(f).$$
\end{remark}

The proof of Theorem \ref{derivative-thm} is based on the following geometric observation.

\begin{lemma}\label{deriv-codim-lem}
For generic $v_1,\ldots,v_s\in V$, where $s<\dim V$, we have
$$c'(\partial_{v_1}f,\ldots,\partial_{v_s}f)\ge g_{\sym}(f)-s+1.$$
\end{lemma}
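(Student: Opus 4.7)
My strategy is to reinterpret the locus $S(\partial_{v_1}f,\ldots,\partial_{v_s}f)$ in terms of the variety $Z^{\sym}_f$, and then bound its dimension via an incidence variety fibered over $\P^{s-1}$.

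First, I would compute the Jacobi matrix of $(\partial_{v_1}f,\ldots,\partial_{v_s}f)$ at a point $x\in V$: since $\partial_{x_j}(\partial_{v_i}f)(x)=H_f(x)(v_i,e_j)$, this Jacobian is the linear map $V\to\k^s$ sending $u\mapsto (H_f(x)(v_i,u))_{i=1}^s$. Its rank drops below $s$ exactly when there is a nonzero tuple $(c_1,\ldots,c_s)$ with $\sum_i c_iv_i\in\ker H_f(x)$. Setting $W:=\sspan(v_1,\ldots,v_s)$, this yields the reinterpretation
$$S(\partial_{v_1}f,\ldots,\partial_{v_s}f)=\{x\in V \ |\ W\cap\ker H_f(x)\neq 0\}.$$

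Next, I would form the incidence variety
$$\mathcal{Z}:=\{((c_i),(v_j),x)\in \P^{s-1}\times V^s\times V \ |\ \sum_i c_iv_i\in\ker H_f(x)\}.$$
For a fixed point $(c_i)\in\P^{s-1}$, picking some $c_k\neq 0$ and substituting $w=\sum_i c_iv_i$ (thereby eliminating $v_k$ in favor of $w$) identifies the fiber of $\mathcal{Z}\to\P^{s-1}$ with $V^{s-1}\times Z^{\sym}_f$, of dimension $(s-1)\dim V+\dim Z^{\sym}_f=(s+1)\dim V-g_{\sym}(f)$. Hence
$$\dim\mathcal{Z}\le (s-1)+(s+1)\dim V-g_{\sym}(f).$$

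Finally, let $\mathcal{Z}'\sub V^s\times V$ be the image of $\mathcal{Z}$ under the projection forgetting the $\P^{s-1}$ coordinate; by the first step its fiber over $(v_1,\ldots,v_s)\in V^s$ is exactly $S(\partial_{v_1}f,\ldots,\partial_{v_s}f)$, and $\dim\mathcal{Z}'\le\dim\mathcal{Z}$. By upper semicontinuity of fiber dimension, for generic $(v_i)\in V^s$ this fiber is either empty (if the projection $\mathcal{Z}'\to V^s$ is not dominant) or of dimension at most $\dim\mathcal{Z}'-s\dim V\le \dim V+s-1-g_{\sym}(f)$. In either case $\codim_V S(\partial_{v_1}f,\ldots,\partial_{v_s}f)\ge g_{\sym}(f)-s+1$, which gives the asserted bound on $c'$.

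The only real content is the identification of the fiber of $\mathcal{Z}\to\P^{s-1}$ with $V^{s-1}\times Z^{\sym}_f$, which is immediate from an elementary linear change of coordinates on $V^s$; everything else is routine dimension counting, and the hypothesis $s<\dim V$ is used only to ensure that a generic $(v_i)$ actually spans an $s$-dimensional $W$.
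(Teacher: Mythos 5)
Your proof is correct and takes essentially the same route as the paper: both bound the dimension of an incidence variety recording the witness of the Jacobian rank drop, reduce everything to $\dim Z^{\sym}_f$, and conclude by restricting to components dominant over $V^s$. The only (cosmetic) difference is that you fiber your incidence variety over the $\P^{s-1}$ of coefficient vectors and identify each fiber with $V^{s-1}\times Z^{\sym}_f$ by a linear change of coordinates, whereas the paper fibers its variety $\wt{Z}^{(s)}$ over $Z^{\sym}_f$ and recovers the same count via the ``at least $1$-dimensional fibers'' of the forgetful map; the two bookkeepings yield the identical bound.
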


\begin{proof}
Let us denote by $Z^{(s)}\sub V^s\times V$ the locally closed subvariety consisting of $(v_1,\ldots,v_s,x)$, such that 
$v_1,\ldots,v_s$ are linearly independent and $\dim\sspan (H_f(x)(\cdot,v_1),\ldots,H_f(x)(\cdot,v_s))<s$
(here we consider $H_f(\cdot,v)$ as a linear form on $V$). We want to estimate the dimension of $Z^{(s)}$.
We have a surjective map (with at least $1$-dimensional fibers)
$$\wt{Z}^{(s)}\to Z^{(s)},$$
where $\wt{Z}^{(s)}\sub V\times V^s\times V$ is given by
$$\wt{Z}^{(s)}=\{(v,v_1,\ldots,v_s,x) \ |\ v\in \ker H_f(x), v\neq 0, (v_1,\ldots,v_s) \text{ linearly independent}, v\in \sspan(v_1,\ldots,v_s)\}.$$
We have a natural projection 
$$\wt{Z}^{(s)}\to Z_f^{\sym}: (v,v_1,\ldots,v_s,x)\mapsto (v,x),$$
which is a locally trivial fibration whose fibers are irreducible of dimension $n(s-1)+s$, where $n=\dim V$.
It follows that 
$$\dim Z^{(s)}\le \dim \wt{Z}^{(s)}-1\le \dim Z_f^{\sym}+n(s-1)+s-1.$$
Hence,
$$\codim_{V^s\times V} Z^{(s)}\ge g_{\sym}(f)-s+1.$$

Next, we observe that $H_f(x)(\cdot,v)=f^{(1,1,d-2)}(\cdot,v,x)=(\partial_v f)^{(1,d-2)}(\cdot,x)$, so
$S(\partial_{v_1}f,\ldots,\partial_{v_s}f)$ is exactly the fiber over $(v_1,\ldots,v_s)$ of the projection $Z^{(s)}\to V^s$.
For generic $v_1,\ldots,v_s$, only the components of $Z^{(s)}$ dominant over $V^s$ will play a role, and we deduce that
$$c'(\partial_{v_1}f,\ldots,\partial_{v_s}f)=\codim_V S(\partial_{v_1}f,\ldots,\partial_{v_s}f)\ge \codim_{V^s\times V} Z^{(s)}\ge g_{\sym}(f)-s+1.$$
\end{proof}

\medskip

\begin{proof}[Proof of Theorem \ref{derivative-thm}]
(i) By Lemma \ref{deriv-codim-lem} with $s=1$, $c(\partial_v f)\ge g_{\sym}(f)$. Hence, by Theorems \ref{cf-thm}(i) and \ref{sym-main-thm},  
$$\rk^S(\partial_v f)\ge \frac{1}{2}\cdot c(\partial_v f)\ge \frac{1}{2}\cdot g_{\sym}(f)\ge 2^{2-d}\cdot \rk^S(f).$$

\noindent
(ii) If $c'(\partial_{v_1}f,\ldots,\partial_{v_s}f)\ge s$
(resp., $c'(\partial_{v_1}f,\ldots,\partial_{v_s}f)\ge s+2$) then $(\partial_{v_1}f,\ldots,\partial_{v_s}f)$ define a (resp., normal) complete intersection of
codimension $s$. Hence, the assertion follows from Theorem \ref{sym-main-thm} and Lemma \ref{deriv-codim-lem}.
\end{proof}

\subsection{Singularities of the polar map}\label{sing-polar-sec}

Let $f\in \k[V]_d$.
Note that $H_f(x)$ can be identified with the tangent map to the {\it polar map} $\phi_f:V\to V^*$ of $f$ sending $x$ to
$f^{(1)}_x=df|_x$. Thus, $g_{\sym}(f)$ measures the degeneracy of this map.

More precisely, for any morphism $\phi:X\to Y$ between smooth connected varieties, let us define the 
{\it Thom-Boardman rank}\footnote{The name is due to the relation with Thom-Boardman stratification in singularity theory, see \cite{Board}.} 
of $\phi$, denoted as $\rk^{TB}(\phi)$, as follows. 
Consider the subvariety $Z_\phi$ in the tangent bundle $TX$ of $X$ consisting
of $(x,v)$ such that $d\phi_x(v)=0$. Then we set
$$\rk^{TB}(\phi)=\codim_{TX} Z_\phi.$$
Note that $\rk^{TB}(\phi)\le r$, where $r$ is the generic rank of the differential of $\phi$, however, the inequality can be strict.

By definition, 
$$g_{\sym}(f)=\rk^{TB}(\phi_f).$$ 
As is well known, the generic rank of $d\phi_f=H_f$ is related to the dimension of
the projective dual variety $X^*$ of the projective hypersurface associated with $f$ (more precisely, $\dim X^*+2$
is the generic rank of $H_f$ over the hypersurface $f=0$). However, it is easy to see that $g^{\sym}(f)$ can be
much smaller than the generic rank of $\phi_f$.
For example, if $q_1(x)$ and $q_2(y)$ are nondegenerate quadratic forms in two different groups of variables
$(x_1,\ldots,x_n)$, $(y_1,\ldots,y_n)$,
then $\rk^S(q_1(x)q_2(y))=1$, so $g_{\sym}(q_1(x)q_2(y))\le 4$.
On the other hand, the generic rank of $\phi_{q_1(x)q_2(y)}$ is $2n$ (assuming the characteristic of $\k$ is $\neq 2,3$).

\begin{example}
In the case $d=3$, the Schmidt rank of $f$ is equal to its slice rank $s(f)$, i.e., the minimal $s$ such that there exists a
linear subspace $L\sub V$ of codimension $s$ contained in $(f=0)$. 
Thus, for a cubic form $f$, assuming that $\k$ is algebraically closed of characteristic $\neq 2,3$, we get from
Theorem \ref{sym-main-thm} and from \eqref{sym-trivial-ineq} that
$$s(f)\le \rk^{TB}(\phi_f)\le 4 s(f).$$
If $f$ is a general homogeneous polynomial of degree $d$ then we still have $\rk^S(f)=s(f)$ (see \cite{BBOV1}), so for such $f$,
assuming $\k$ to be algebraically closed of characteristic not dividing $(d-1)d$, we have
$$2^{3-d}s(f)\le \rk^{TB}(\phi_f)\le 4 s(f).$$
\end{example}

It seems that the invariant $\rk^{TB}(\phi)$ deserves to be studied more. For example, we do not know whether
it is always true that $\rk^{TB}(\phi)=\dim X$ for a {\it finite} morphism $\phi$ between smooth projective varieties
in characteristic zero.
Note the following corollary from Prop.\ \ref{cf-ineq-prop}(iii).

\begin{cor}\label{TB-rank-cor} 
Assume that $\mathrm{char}(\k)$ does not divide $d-1$. Then
$$\rk^{TB}(\phi_f)\ge c(f).$$
In particular, if the projective hypersurface associated with $f$ is smooth then
$$\rk^{TB}(\phi_f)=\dim V.$$
\end{cor}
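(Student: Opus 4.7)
The plan is to assemble this corollary from pieces already in place. The crucial observation, already noted in the paragraph preceding the corollary, is the identification $\rk^{TB}(\phi_f)=g_{\sym}(f)$: the polar map $\phi_f\colon V\to V^*$ has tangent map $H_f(x)$ at $x$, so the subvariety $Z_{\phi_f}\subset TV=V\times V$ coincides with $Z^{\sym}_f$ by definition. Once this identification is made, the first inequality $\rk^{TB}(\phi_f)\ge c(f)$ is nothing more than a restatement of Proposition \ref{cf-ineq-prop}(iii), which under the hypothesis that $\mathrm{char}(\k)$ does not divide $d-1$ yields $c(f)\le g_{\sym}(f)=\rk^{TB}(\phi_f)$. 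No additional work is required for this half.

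For the ``in particular'' assertion, I would combine two elementary observations with the inequality just established. First, there is a general upper bound $\rk^{TB}(\phi_f)\le \dim V$: the zero section $V\times\{0\}\subset TV$ is always contained in $Z_{\phi_f}$ (since any differential vanishes on the zero tangent vector), so $\dim Z_{\phi_f}\ge \dim V$, giving $\codim_{TV}Z_{\phi_f}\le \dim V$. Second, smoothness of the projective hypersurface $X_f\subset\P(V)$ forces $c(f)=\dim V$: for $d\ge 2$ the origin is always a singular point of $\{f=0\}\subset V$, and any hypothetical $x\in\Sing(\{f=0\})$ with $x\ne 0$ would descend to a singular point $[x]$ of $X_f$ (because $[x]\in X_f$ is smooth precisely when $df|_x\ne 0$). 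Thus smoothness of $X_f$ reduces $\Sing(\{f=0\})$ to at most the origin, so $c(f)\ge \dim V$.

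Putting everything together gives the chain
\[
\dim V\le c(f)\le \rk^{TB}(\phi_f)\le \dim V,
\]
forcing equality throughout, as claimed.

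I do not foresee any real obstacle: the proof is a short assembly of the tautological identification $\rk^{TB}(\phi_f)=g_{\sym}(f)$, the previously established Proposition \ref{cf-ineq-prop}(iii), and the two trivial facts that $Z_{\phi_f}$ contains the zero section of $TV$ and that a smooth projective hypersurface has no affine singular point away from the origin.
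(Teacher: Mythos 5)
Your proposal is correct and follows exactly the route the paper intends: the tautological identification $\rk^{TB}(\phi_f)=g_{\sym}(f)$ plus Proposition \ref{cf-ineq-prop}(iii) give the inequality, and the two easy facts (the zero section lies in $Z_{\phi_f}$, so $\rk^{TB}(\phi_f)\le\dim V$, while smoothness of the projective hypersurface forces $\Sing(f=0)=\{0\}$ and hence $c(f)=\dim V$) close the chain for the second assertion. The paper leaves these details unstated, and your write-up supplies them accurately.
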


Let $V_f\sub \P V$ denote the projective hypersurface associated with $f$.
In \cite{CD} the authors consider (for $\k=\C$) the closed locus $S_{\ge r}\sub V_f$
where the co-rank of the Hessian $H_f$ is $\ge r$. They prove that if $V_f$ is smooth then
for $r(r+1)\le \dim V$, the subvariety $S_{\ge r}(V)$ is nonempty and
$$\codim_{V_f} S_{\ge r}(V) \le r(r+1)/2.$$
Using Corollary \ref{TB-rank-cor} we get the inequality
$$\codim_{V_f} S_{\ge r}(V) \ge r-1.$$

If $V_f$ is smooth then the projectivization of the 
restriction of $\phi_f$ to $(f=0)$ can be identified with the {\it Gauss map} 
$$\ga:V_f\to \P V^*.$$
It is easy to check that if $\mathrm{char}(k)$ does not divide $d(d-1)$ then
for any point $x\in (f=0)\sub V$ one has $\ker(d(\phi_f)_x)\sub T_x(f=0)$ and the natural projection
$$\ker(d(\phi_f)_x)\to \ker(d\ga_x)$$
is an isomorphism. Thus, the above inequalities can be viewed as 
restrictions on possible degeneracies of the Gauss map of $V_f$ (which is finite by
a result of Zak in \cite{Zak}).

\appendix
\section{}

This appendix gives alternative versions of theorems \ref{main-thm}, \ref{sym-main-thm}, and \ref{derivative-thm}, with better bounds for $d \ge 6$. Here is the the second version of theorem \ref{main-thm}:

\begin{theorem}\label{tensor}
(i) Let $g'(P)$ denote the codimension in $V_2\times\ldots\times V_d$ of the Zariski closure of $Z_P(\k).$
Then one has
$$\rk^S(P)\le (2^{d-1}-1)\cdot g'(P).$$

\noindent
(ii) Assume $\k$ is algebraically closed. Then
for a collection $\ov{P}=(P_1,\ldots,P_s)$, one has
$$\rk^S(\ov{P})\le (2^{d-1}-1)\cdot (g(\ov{P})+s-1).$$
\end{theorem}

For algebraically closed fields the above result matches the one obtained by Cohen and Moshkowitz \cite{CM}, but we give a very short proof.

\begin{proof}
    
     (i) The proof will mimic that of theorem \ref{cf-thm}. Write $g=g'(P)$ and let $X$ be an irreducible component of the Zariski closure of 
    $Z_P(\k)$ such that $\codim_{V_1\times V_2\times \ldots \times V_{d-1}} X=g.$  Let $x_1,\ldots,x_n$ be a basis for $V_d^*.$ Write $P = \sum_{k=1}^n x_k\cdot Q_k,$ where $Q_k:V_1\times V_2\times \ldots \times V_{d-1}\to \k $ are polylinear forms.  Let $v_0\in X$ be a smooth $\k$-point and $h_1,\ldots,h_g\in I(X)$ defined over $\k$ with linearly independent differentials at $v_0.$ For all $k\in [n], Q_k = (Q_k)_{v_0}^{(V_1,V_2,\ldots,V_{d-1})} \in I(X),$ so by lemma \ref{mixed-deriv-ideal-lem} it is in the tensor ideal generated by
    \[
        \left((h_i)^{{(V_{j_1},\ldots,V_{j_s})}}_{v_0}\right)_{i\in [g],\emptyset\neq\{j_1<\ldots<j_s\}\subset [d-1]}.
    \]
    By definition, $P$ is in the tensor ideal generated by the $Q_k,$ so $\rk^S(P)\le (2^{d-1}-1)\cdot g.$
    
    \noindent (ii) We deduce this from the result for a single tensor as in the proof of theorem \ref{main-thm}.
\end{proof}

The second version of theorem \ref{sym-main-thm} is:
\begin{theorem}\label{sym-alt}
    Assume that $\k$ is algebraically closed of characteristic not dividing $(d-1)d$. 
    (i)  For a single form $f$ of degree $d,$
    $$\rk^S(f)\le (d-1)\cdot g_{\sym}(f).$$ 
    \noindent
    (ii) If $(f_1,\ldots,f_s)$ define a complete intersection of codimension $s$ in $V$, then
    $$\rk^S(\ov{f})\le (d-1)\cdot (g_{\sym}(\ov{f})+s-1).$$
\end{theorem}

\begin{proof}
    (i) Combine theorem \ref{cf-thm}(i) and proposition \ref{cf-ineq-prop}(iii).
    
    \noindent (ii) Combine theorem \ref{cf-thm}(ii) and proposition \ref{cf-ineq-bis-prop}(iii).
\end{proof}

And the second version of theorem \ref{derivative-thm}:
\begin{theorem}
    Let $f$ be a homogeneous polynomial of degree $d$.
Assume that $\k$ is algebraically closed of characteristic not dividing $(d-1)d$.

\noindent
(i) For generic $v\in V$, one has $\rk^S(\partial_v f)\ge \frac{1}{2d-2}\cdot \rk^S(f)$.

\noindent
(ii) For $s\le \frac{1}{2d-2}\cdot \rk^S(f)+\frac{1}{2}$ (resp., $s\le \frac{1}{2d-2}\cdot \rk^S(f)-\frac{1}{2}$), and for generic $v_1,\ldots,v_s\in V$, the derivatives $(\partial_{v_1}f,\ldots,\partial_{v_s}f)$ define a (resp., normal)
complete intersection of codimension $s$ in $V$.
\end{theorem}

\begin{proof}
    (i) By Lemma \ref{deriv-codim-lem} with $s=1$, $c(\partial_v f)\ge g_{\sym}(f)$. Hence, by Theorems \ref{cf-thm}(i) and \ref{sym-alt},  
$$\rk^S(\partial_v f)\ge \frac{1}{2}\cdot c(\partial_v f)\ge \frac{1}{2}\cdot g_{\sym}(f)\ge \frac{1}{2d-2}\cdot \rk^S(f).$$

\noindent
(ii) If $c'(\partial_{v_1}f,\ldots,\partial_{v_s}f)\ge s$
(resp., $c'(\partial_{v_1}f,\ldots,\partial_{v_s}f)\ge s+2$) then $(\partial_{v_1}f,\ldots,\partial_{v_s}f)$ define a (resp., normal) complete intersection of
codimension $s$. Hence, the assertion follows from Theorem \ref{sym-alt} and Lemma \ref{deriv-codim-lem}.
\end{proof}

\end{document}